\newcommand{\R}{\mathbb{R}}
\newcommand{\D}{\mathbb{D}}
\newcommand{\C}{\mathbb{C}}
\renewcommand{\H}{\mathcal{H}}
\newcommand{\N}{\mathbb{N}}
\theoremstyle{plain}
\newtheorem{thm}{Theorem}[section]
\newtheorem*{thm*}{Theorem}
\newtheorem{lem}[thm]{Lemma}
\newtheorem{cor}[thm]{Corollary}
\theoremstyle{definition}
\newtheorem{rmk}[thm]{Remark}
\numberwithin{equation}{section}
\theoremstyle{letterthm}
\newtheorem{letterthm}{Theorem}
\newcommand{\abs}[1]{\left|#1\right|}
              \def\th{\theta}       
\def\vp{\varphi}      \def\z{\zeta}
\def\zth{\z_{\th}}  
\def\ra{\rightarrow}      \def \ov{\overline}
\def\beq{\begin{equation}}  \def\eeq{\end{equation}}
\def\beqq{\begin{equation*}}  \def\eeqq{\end{equation*}}
\def\bprof{\begin{proof}}    \def\eprof{\end{proof}}
\def\bad{\begin{aligned}}    \def\ead{\end{aligned}}
\def\l{\left}      \def\r{\right}
\def\bthm{\begin{thm}} \def\ethm{\end{thm}}
\def\blem{\begin{lem}} \def\elem{\end{lem}}
\def\p{\partial} \def\e{e^{i \th}}
\def\be{\begin{enumerate}} \def\ee{\end{enumerate}}
\begin{document}

\title[Variability regions for the fourth derivative]{Variability regions for the fourth derivative of bounded analytic functions}

\author{Gangqiang Chen}
\address{
School of Sciences,
Nanchang University,
Nanchang, China}
\email{cgqmath@qq.com; cgqmath@ims.is.tohoku.ac.jp}

\subjclass[2010]{Primary 30C80; secondary 30F45}
\date{\today}
\keywords{Bounded analytic functions, Schwarz's Lemma, Dieudonn\'e's Lemma, variability region}

\begin{abstract}
Let $z_0$ and $w_0$ be given points in the open unit disk $\mathbb{D}$
with $|w_0| < |z_0|$, and  $\H_0$ be the class of all analytic self-maps $f$
of $\mathbb{D}$ normalized by $f(0)=0$.
In this paper, we establish the fourth-order Dieudonn\'e's Lemma and apply it to determine the variability region
$\{f^{(4)}(z_0): f\in \H_0,f(z_0) =w_0, f'(z_0)=w_1, f''(z_0)=w_2\}$ for given $z_0,w_0,w_1,w_2$ and give the form of all the extremal functions.
\end{abstract}

\maketitle

\section{Introduction}
\label{intro}
We denote by $\C$ the complex plane. For $c\in\C$ and $\rho>0$, we define the disks $\D(c, \rho)$ and $\overline{\D}(c, \rho)$ by
$\D(c, \rho):=\left\{ \zeta \in \C : |\zeta-c|< \rho \right\}$,
and
$\overline{\D}(c, \rho):=\left\{\zeta \in \C : |\zeta-c|\le \rho \right\}$. The open and closed unit disk $\D(0,1)$ are denoted by $\mathbb{D}$ and $\overline{\D}$ respectively. Throughout this article, let $z_0,w_0\in \D$ be given points with $|w_0|<|z_0|$, and $\H_0$ be the class of all analytic self-mappings $f$ of $\mathbb{D}$ satisfying $f(0)=0$. First we would like to recall the following classical result obtained by Schwarz in 1890, which describes the variability region of $f(z_0)$ for $z_0\in \D$ when $f$ ranges over $\H_0$.

\begin{letterthm}{\bf (Schwarz's Lemma)}
Let $z_0\in \mathbb{D}$. Then $\{f(z_0):f \in \mathcal{H}_0\}=\ov{\D}(0,\ |z_0|)$ and $\{f'(0):f \in \mathcal{H}_0\}=\ov{\D}$.\\
 Furthermore, $f(z_0)\in \partial \D(0,\ |z_0|)$ and $f'(0)\in \partial \D$ hold if and only if
 $ f(z)=e^{i \theta}z$ for some $\theta \in \mathbb{R}$.
\end{letterthm}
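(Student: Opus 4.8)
The plan is to reduce everything to a single application of the maximum modulus principle for the auxiliary function $g(z):=f(z)/z$. Since every $f\in\H_0$ satisfies $f(0)=0$, the quotient $f(z)/z$ has a removable singularity at the origin, so $g$ extends to an analytic function on $\D$ with $g(0)=f'(0)$. First I would fix $r\in(0,1)$ and note that on the circle $|z|=r$ we have $|g(z)|=|f(z)|/r\le 1/r$, because $f$ maps $\D$ into itself. The maximum modulus principle then gives $|g(z)|\le 1/r$ throughout $\ov{\D}(0,r)$, and letting $r\to 1$ yields $|g(z)|\le 1$ for every $z\in\D$. Evaluating at $z_0$ and at $0$ produces the two inclusions $f(z_0)\in\ov{\D}(0,|z_0|)$ and $f'(0)\in\ov{\D}$.

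For the reverse inclusions I would simply exhibit extremal candidates. Given any target $w$ with $|w|\le|z_0|$, the linear map $f(z)=(w/z_0)z$ lies in $\H_0$, is a self-map of $\D$ since $|w/z_0|\le 1$, and satisfies $f(z_0)=w$; this shows $\ov{\D}(0,|z_0|)\subseteq\{f(z_0):f\in\H_0\}$. Likewise, for $|w|\le 1$ the map $f(z)=wz$ realizes $f'(0)=w$, giving $\ov{\D}\subseteq\{f'(0):f\in\H_0\}$. Combined with the first step, both variability regions are determined.

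The equality statement follows from the strict form of the maximum modulus principle. If $f(z_0)\in\partial\D(0,|z_0|)$ then $|g(z_0)|=1$, so $|g|$ attains its maximum at the interior point $z_0$; hence $g$ is a unimodular constant $e^{i\th}$, i.e. $f(z)=e^{i\th}z$. The same argument applied at the origin handles the case $f'(0)\in\partial\D$. The converse is an immediate computation: $f(z)=e^{i\th}z$ gives $f(z_0)=e^{i\th}z_0$ of modulus $|z_0|$ and $f'(0)=e^{i\th}$ of modulus $1$.

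Since this is the classical Schwarz Lemma, I do not expect a genuine obstacle; the only subtlety worth flagging is the passage to the boundary $r\to 1$, which is needed because $f$ need not extend continuously to $\ov{\D}$, so the bound $|g|\le 1$ must be obtained as a limit of the interior bounds $|g|\le 1/r$ rather than read off directly on $\partial\D$.
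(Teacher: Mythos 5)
Your proof is correct: it is the classical maximum-modulus argument applied to $g(z)=f(z)/z$, including the careful limit $r\to 1$ and the rigidity step for the equality case. The paper itself states Schwarz's Lemma only as classical background (Theorem A) and gives no proof, so there is no in-paper argument to compare against; your write-up is the standard and fully adequate one. The only caveat worth noting is that your claim that $f(z_0)\in\partial\D(0,|z_0|)$ \emph{by itself} forces $f(z)=e^{i\theta}z$ tacitly requires $z_0\neq 0$; since the theorem's equality clause conjoins this condition with $f'(0)\in\partial\D$, your argument at the origin already covers that degenerate case, so the statement as written is still fully proved.
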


Since the discovery of Schwarz's Lemma, more and more authors considered the space $\H_0$ and the extensions of Schwarz's Lemma.
In 1931, Dieudonn\'e \cite{dieudonne1931} first described the variability region of $f'(z_0)$, $f\in \mathcal{H}_0$, at a fixed point $z_0\in \mathbb{D}$,
which could be considered as Schwarz's Lemma of the derivative.
For $a\in\D$, define $T_{a}\in \text{Aut}(\D)$ by
$$T_{a}(z)=\frac{z+a}{1+\overline{a}z},\quad z\in \D,$$
and $c_1(z_0,w_0), \rho_1(z_0,w_0)$ by
\begin{equation*}
\left\{
\begin{aligned}
&c_1(z_0,w_0)=\frac{w_0}{z_0},\\
&\rho_1(z_0,w_0)=\frac{|z_0|^2-|w_0|^2}{|z_0|(1-|w_0|^2)},
\end{aligned}
\right.
\end{equation*}
then we show his observation as follows(see also \cite{beardon2004multi}, \cite{chen_2019} and \cite{rivard2013application}). .
\begin{letterthm}{\bf (Dieudonn\'e's Lemma)}
Let $z_0,w_0\in \D$ and $|w_0|<|z_0|$. Then
$\{f'(z_0):f \in \mathcal{H}_0, f(z_0)=w_0\}= \overline{\mathbb{D}}
  \left( c_1(z_0,w_0) , \rho_1(z_0,w_0) \right)$. \\
  Furthermore,
$f'(z_0) \in \partial \D \left( c_1(z_0,w_0) , \rho_1(z_0,w_0) \right) $ for $\theta\in \R$ if and only if
$f(z)=zT_{w_0/z_0}(e^{i \theta}T_{-z_0}(z))$.
\end{letterthm}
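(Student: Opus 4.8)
The plan is to reduce this two-point problem to the one-point Schwarz and Schwarz--Pick lemmas by dividing out the forced zero at the origin. Since every $f\in\mathcal{H}_0$ vanishes at $0$, I would write $f(z)=z\,g(z)$, where $g(z)=f(z)/z$ extends holomorphically across the origin. Schwarz's Lemma gives $|g(z)|=|f(z)|/|z|\le 1$ on $\mathbb{D}$, and because $g(z_0)=w_0/z_0$ has modulus $|w_0|/|z_0|<1$, the maximum principle forces $g$ to be a genuine self-map of $\mathbb{D}$. Conversely, for any holomorphic self-map $g$ of $\mathbb{D}$ the product $z\,g(z)$ again lies in $\mathcal{H}_0$, so $f\mapsto g$ is a bijection between $\{f\in\mathcal{H}_0:f(z_0)=w_0\}$ and $\{g:\mathbb{D}\to\mathbb{D}\text{ holomorphic}:g(z_0)=w_0/z_0\}$. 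Differentiating $f=zg$ yields the key identity $f'(z_0)=\frac{w_0}{z_0}+z_0\,g'(z_0)$, which already exhibits the center $c_1(z_0,w_0)=w_0/z_0$ and shows that the variability region of $f'(z_0)$ is the image under $\zeta\mapsto c_1+z_0\zeta$ of the variability region of $g'(z_0)$.

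It therefore remains to describe $\{g'(z_0)\}$ as $g$ runs over the self-maps of $\mathbb{D}$ with $g(z_0)=a$, where $a:=w_0/z_0$. Writing $\phi=T_{-z_0}$ and $\psi=T_{-a}$ (so that $\phi(z_0)=0$ and $\psi(a)=0$), I would conjugate to $h:=\psi\circ g\circ\phi^{-1}$; then $h$ is a self-map of $\mathbb{D}$ with $h(0)=0$, i.e.\ $h\in\mathcal{H}_0$, and this correspondence is again a bijection because $\phi^{-1}=T_{z_0}$ and $\psi^{-1}=T_a$ are disk automorphisms. The chain rule applied to $g=T_a\circ h\circ T_{-z_0}$, using $T_{-z_0}'(z_0)=(1-|z_0|^2)^{-1}$ and $T_a'(0)=1-|a|^2$, gives $g'(z_0)=\dfrac{1-|a|^2}{1-|z_0|^2}\,h'(0)$. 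By Schwarz's Lemma $h'(0)$ fills $\overline{\mathbb{D}}$ exactly, so $g'(z_0)$ fills the disk centered at $0$ of radius $(1-|a|^2)/(1-|z_0|^2)$; multiplying by $z_0$ and translating by $c_1$ then produces precisely $\overline{\mathbb{D}}(c_1(z_0,w_0),\rho_1(z_0,w_0))$ after a short simplification of the radius. The containment ``$\subseteq$'' is the resulting Schwarz--Pick estimate $|g'(z_0)|\le(1-|a|^2)/(1-|z_0|^2)$, while the reverse containment ``$\supseteq$'' comes from the surjectivity of $g\mapsto h$ onto $\mathcal{H}_0$.

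For the extremal characterization I would track the equality case. A point $f'(z_0)$ lies on $\partial\mathbb{D}(c_1,\rho_1)$ exactly when $g'(z_0)$ lies on the boundary of its disk, i.e.\ when $|h'(0)|=1$. The equality statement in Schwarz's Lemma then forces $h(z)=e^{i\theta}z$ for some $\theta\in\mathbb{R}$, and inverting the conjugation via $g=T_a\circ h\circ T_{-z_0}$ yields $g(z)=T_{w_0/z_0}\!\big(e^{i\theta}T_{-z_0}(z)\big)$; since $f=zg$, this is exactly the asserted family $f(z)=zT_{w_0/z_0}(e^{i\theta}T_{-z_0}(z))$. I expect the main difficulty to be bookkeeping rather than conceptual: one must verify carefully that $g\mapsto h$ is a genuine bijection onto $\mathcal{H}_0$ (so that every boundary point is attained and nothing outside the disk occurs), compute the inverse Möbius maps $\phi^{-1}=T_{z_0}$ and $\psi^{-1}=T_a$ correctly, and check that the rotation $e^{i\theta}z$ transports back to exactly the normalized extremal form above rather than to a rotated or reparametrized variant.
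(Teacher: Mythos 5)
Your argument is correct, and it is essentially the method this paper itself uses (the paper never proves this classical lemma, citing Dieudonn\'e and Beardon--Minda, but your decomposition $f=zg$, the conjugation $h=T_{-a}\circ g\circ T_{z_0}$ with $a=w_0/z_0$, and the reduction to Schwarz's Lemma is exactly the scheme run three orders higher in the proof of Corollary 2.2, where the inequality step is the Cho--Kim--Sugawa lemma --- whose first-order case is just the Schwarz--Pick bound you invoke --- and the coverage and extremal steps are the same explicit M\"obius compositions).

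One caveat you should not gloss over: the ``short simplification of the radius'' does \emph{not} produce the paper's printed $\rho_1$. Your (correct) computation gives
\begin{equation*}
|z_0|\,\frac{1-|w_0/z_0|^2}{1-|z_0|^2}=\frac{|z_0|^2-|w_0|^2}{|z_0|\left(1-|z_0|^2\right)},
\end{equation*}
whereas the paper defines $\rho_1(z_0,w_0)=\frac{|z_0|^2-|w_0|^2}{|z_0|(1-|w_0|^2)}$. The paper's denominator $1-|w_0|^2$ is a typo for $1-|z_0|^2$: it is inconsistent with the paper's own later assertion that $D_1g(r)=\lambda$ when $w_1=c_1(r,s)+\rho_1(r,s)\lambda$ (which forces $\rho_1(r,s)=\frac{r^2-s^2}{r(1-r^2)}$), with the pattern of $\rho_2,\rho_3,\rho_4$, and with the extremal example $f(z)=z^2$ at $z_0=\tfrac12$, where $|f'(z_0)-c_1|=\tfrac12$ equals your radius but exceeds the printed one ($\tfrac25$) even though $f$ is of the asserted extremal form with $\theta=0$. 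So your derivation is right and in fact exposes an error in the statement as printed; just do not claim the two radius formulas coincide.
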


In 1934, Rogosinski\cite{rogosinski1934} determined the variability region of $f(z)$ for $z\in\D$, $f\in \H_0$ with $|f'(0)|<1$, which can be considered as a
sharpened version of Schwarz's Lemma (see also \cite{duren1983univalent} and \cite{goluzin1969geometric}). is an improvement of the derivative part of
\begin{letterthm}{\bf (Rogosinski's Lemma)}
If $f\in \H_0$ and $f'(0)$ is fixed, then for $z\in \D\setminus\{0\}$, the region of values of $f(z)$ is the closed disk $\ov{\D}(c,r)$, where
 $$c=\dfrac{zf'(0)(1-z^2)}{1-|z|^2|f'(0)|^2},\quad r=|z|^2 \dfrac{1-|f'(0)|^2}{1-|z|^2|f'(0)|^2}.$$
\end{letterthm}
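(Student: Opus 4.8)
The plan is to reduce Rogosinski's Lemma to Schwarz's Lemma by the standard device of dividing out the zero at the origin and then conjugating by a disk automorphism. First I would set $c_1:=f'(0)$ and, for $f\in\H_0$, define $g(z):=f(z)/z$. Since $f(0)=0$ the singularity at the origin is removable, and Schwarz's Lemma guarantees that $g$ is an analytic self-map of $\D$ with $g(0)=f'(0)=c_1$. Thus prescribing $f'(0)=c_1$ is exactly the same as prescribing $g(0)=c_1$, and the problem becomes one about self-maps of $\D$ with a fixed value at the origin.

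Next I would further normalize by composing with the automorphism $T_{-c_1}$, setting $\phi:=T_{-c_1}\circ g$. Then $\phi$ is an analytic self-map of $\D$ with $\phi(0)=T_{-c_1}(c_1)=0$, so $\phi\in\H_0$. The key observation is that this correspondence is a bijection: given an arbitrary $\phi\in\H_0$, the function $g:=T_{c_1}\circ\phi$ is an analytic self-map of $\D$ with $g(0)=c_1$, and $f(z):=z\,g(z)$ lies in $\H_0$ with $f'(0)=g(0)=c_1$, because $|z\,g(z)|\le|z|<1$. Hence as $f$ ranges over $\{f\in\H_0:f'(0)=c_1\}$ the associated $\phi$ ranges over all of $\H_0$, and throughout we have the identity $f(z)=z\,T_{c_1}(\phi(z))$. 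This single identity converts the statement into an explicit image computation.

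With this in hand the variability region is immediate. By Schwarz's Lemma, $\{\phi(z):\phi\in\H_0\}=\ov{\D}(0,|z|)$ for each fixed $z\in\D\setminus\{0\}$, so the region of values of $f(z)$ equals $z\cdot T_{c_1}\!\left(\ov{\D}(0,|z|)\right)$. Since $T_{c_1}$ is a Möbius transformation it carries the disk $\ov{\D}(0,|z|)$ onto a Euclidean disk, and completing the square in the pseudohyperbolic inequality $\left|T_{-c_1}(w)\right|\le|z|$ identifies its center as $\tfrac{c_1(1-|z|^2)}{1-|z|^2|c_1|^2}$ and its radius as $\tfrac{|z|(1-|c_1|^2)}{1-|z|^2|c_1|^2}$. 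Multiplying by $z$ scales the radius by $|z|$ and sends the center to $z$ times itself, which produces exactly the closed disk $\ov{\D}(c,r)$ with the center and radius asserted in the lemma (after substituting $c_1=f'(0)$).

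The conceptual content is entirely carried by the reduction to Schwarz's Lemma together with the bijection $f\leftrightarrow\phi$, so I expect no genuine difficulty there. The only real work is the final bookkeeping: verifying that the image of $\ov{\D}(0,|z|)$ under the Möbius map $T_{c_1}$ is again a Euclidean disk and computing its parameters correctly, and then tracking the effect of multiplication by $z$. This step is elementary but error-prone, and it is where I would concentrate care to match the stated formulas for $c$ and $r$ precisely.
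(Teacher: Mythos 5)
Your argument is correct, and since the paper states Rogosinski's Lemma without proof (it is quoted as a classical result, with references to Rogosinski, Duren and Goluzin), there is no in-paper proof to compare against; what you give is the standard argument. The reduction --- write $f(z)=zg(z)$, note $g(0)=f'(0)=:c_1$, conjugate by $T_{-c_1}$ to obtain a bijection between $\{f\in\H_0 : f'(0)=c_1\}$ and $\H_0$, then push $\overline{\D}(0,|z|)$ through $T_{c_1}$ and multiplication by $z$ --- is exactly how this lemma is classically proved. In particular, because you set up a genuine bijection $f\leftrightarrow\phi$ and the Schwarz variability region $\{\phi(z):\phi\in\H_0\}$ is the \emph{full} disk $\overline{\D}(0,|z|)$, your proof correctly yields that the region of values is the entire closed disk, not merely an inclusion.

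Two points deserve attention. First, your computation gives the center $c=\frac{zf'(0)(1-|z|^2)}{1-|z|^2|f'(0)|^2}$, whereas the paper prints $(1-z^2)$ in the numerator; these disagree for non-real $z$, so your claim that the computation ``produces exactly'' the stated disk is not literally true. The discrepancy is a typo in the paper, not an error in your argument: replacing $f$ by $f_\theta(z)=e^{-i\theta}f(e^{i\theta}z)$ permutes the class $\{f\in\H_0 : f'(0)=c_1\}$ and shows that the variability region at $z$ is $e^{i\arg z}$ times the region at $|z|$, which forces the center to be $z$ times a function of $|z|$ alone; the cited references also carry $1-|z|^2$. You should flag this rather than assert exact agreement. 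Second, your use of $T_{-c_1}$ tacitly assumes $|f'(0)|<1$: if $|c_1|=1$ then $g\equiv c_1$ by the maximum principle and $T_{-c_1}$ degenerates to a constant, so the bijection breaks down (though the conclusion is trivially true there, the disk having radius $0$). The paper's surrounding discussion does assume $|f'(0)|<1$, but a one-line remark disposing of the degenerate case would make your proof complete exactly as the lemma is stated.
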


In 1996, Mercer \cite{mercer1997sharpened} obtained a description of the variability region of $f(z)$ for $z\in\D$, $f\in \H_0$ with $f(z_0)=w_0 (z_0\ne 0)$. It is worth mentioning that Rogosinski's Lemma and Dieudonn\'e's Lemma are the limiting cases of Mercer's result.

In recent years, a lot of the articles on regions of variability have been written \cite{Ponnusamy2007univalent,Ponnusamy2008close-to-convex,Ponnusamy2009satisfying,yanagihara2005}. Among others,
Rivard \cite{rivard2013application} obtained the so-called second-order Dieudonn\'e Lemma which demonstrates that
if $f\in\mathcal{H}_0$  is not an automorphism of $\mathbb{D}$, then
   \begin{align}\label{ineq:f''}
       &\left|\frac{1}{2}z_0^2 f''(z_0)-\frac{z_0 w_1-w_0}{1-|z_0|^2}
         +\frac{\overline{w_0}(z_0 w_1-w_0)^2}{|z_0|^2-|w_0|^2}\right| +\frac{|z_0||z_0 w_1-w_0|^2}{|z_0|^2-|w_0|^2}\nonumber\\
       &\qquad \le \frac{|z_0| (|z_0|^2-|w_0|^2)}{(1-|z_0|^2)^2},
   \end{align}
where $f(z_0)=w_0$ and $f'(z_0)=w_1\in \Delta(z_0,w_0)$. Equality in \eqref{ineq:f''} holds if and only if $f(z)=zg(z)$ where $g(z)$ is a Blaschke product of degree 1 or 2 (see also \cite{cho2012multi}). Here, we remark that a function $B(z)$ is called a Blaschke product of degree $n \in \N$  if it takes the form
         $$ B(z)=e^{i \theta}\prod\limits_{j=1}^{n}
         \frac{z-z_j}{1-\overline{z_j}z}, \quad z, z_j\in \D, \theta \in \mathbb{R}.$$
We appropriately modify Rivard's result as follows (see \cite{chen_2019}). Denote $c_2(z_0,w_0,\lambda)$ and $\rho_2(z_0,w_0,\lambda)$ by
\begin{equation*}
\left\{
\begin{aligned}
c_2(z_0,w_0,\lambda)&=\frac{2(r^2-s^2)}{z_0^2(1-r^2)^2}\lambda(1-\overline{w_0}\lambda),\\ \rho_2(z_0,w_0,\lambda)&=\frac{2(r^2-s^2)}{r(1-r^2)^2}(1-|\lambda|^2).
\end{aligned}
\right.
\end{equation*}

\begin{letterthm}{\bf (The second-order Dieudonn\'e's Lemma)}
Let $z_0, w_0\in \mathbb{D}$, $\lambda\in \ov{\D}$ with $|w_0|=s<r=|z_0|$, 
$$w_1= c_1(z_0,w_0)+\rho_1(z_0,w_0)\dfrac{r\lambda}{z_0}.$$
 Suppose that $f\in\mathcal{H}_0$, $f(z_0) = w_0$ and $f'(z_0)=w_1$.
Set $u_0=w_0/z_0$ and $\lambda_0=r^2\lambda/z_0^2$. Then
\begin{enumerate}
\item If $|\lambda|=1$, then $f''(z_0)=c_2(z_0,w_0,\lambda)$ and $f(z)=z T_{u_0}(\lambda_0 T_{-z_0}(z))$.
\item If $|\lambda|<1$, then the region of values of $f'''(z_0)$ is the closed disk
\begin{align*}
&\overline{\D}(c_2(z_0,w_0,\lambda), \rho_2(z_0,w_0,\lambda))\\
&=\{z T_{u_0}\left(T_{-z_0}(z) T_{\lambda_0}(\alpha T_{-z_0}(z))\right):\alpha\in \overline{\D}\}.
\end{align*}
Furthermore, $f''(z_0)\in \p\D(c_2(z_0,w_0,\lambda),\rho_2(z_0,w_0,\lambda))$ if and only if\\
$f(z)=z T_{u_0}\left(T_{-z_0}(z) T_{\lambda_0}(\e T_{-z_0}(z))\right)$, where $\theta \in \R$.
\end{enumerate}
\end{letterthm}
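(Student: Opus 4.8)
The plan is to prove the statement by an iterated Schwarz reduction: I strip two elementary factors off $f$, reducing everything to the derivative of an auxiliary self-map of $\D$ at the single point $z_0$, where Schwarz's Lemma applies directly.

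First I would use $f(0)=0$ and Schwarz's Lemma to write $f(z)=z\,\phi(z)$ with $\phi$ a self-map of $\D$; evaluating and differentiating at $z_0$ gives $\phi(z_0)=w_0/z_0=u_0$ and $\phi'(z_0)=(w_1-u_0)/z_0$. Since $\psi:=T_{-u_0}\circ\phi$ is then a self-map with $\psi(z_0)=0$, dividing out the Blaschke factor $T_{-z_0}$ (whose only zero is at $z_0$) produces $\chi:=\psi/T_{-z_0}$, again a self-map of $\D$; this is justified by applying Schwarz's Lemma to $\psi\circ T_{z_0}$, which fixes the origin. Chasing values through $f\mapsto\phi\mapsto\psi\mapsto\chi$ and inserting the prescribed $w_1$ yields $\chi(z_0)=\lambda_0$, so that $|\chi(z_0)|=|\lambda_0|=|\lambda|$. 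This is the step where the precise normalization $w_1=c_1(z_0,w_0)+\rho_1(z_0,w_0)r\lambda/z_0$ is consumed.

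The dichotomy is now transparent. If $|\lambda|=1$, then $|\chi(z_0)|=1$ forces $\chi\equiv\lambda_0$ by the maximum principle, and unwinding the three substitutions recovers $f(z)=z\,T_{u_0}(\lambda_0 T_{-z_0}(z))$ uniquely; this is case (1), and substituting back pins down $f''(z_0)$. If $|\lambda|<1$, then $\chi$ is a genuine self-map with the interior value $\chi(z_0)=\lambda_0$ fixed, and the only surviving freedom is $\chi'(z_0)$. Applying Schwarz's Lemma to $T_{-\lambda_0}\circ\chi\circ T_{z_0}$ shows that $\chi'(z_0)$ ranges over the closed disk $\ov{\D}(0,(1-|\lambda_0|^2)/(1-|z_0|^2))$, swept out exactly by $\chi=T_{\lambda_0}(\alpha T_{-z_0})$ as $\alpha$ runs over $\ov{\D}$, with the boundary attained only for $\alpha=\e$. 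It then remains to transport this disk back to $f''(z_0)$: differentiating the chain twice at $z_0$ and using $\psi(z_0)=T_{-z_0}(z_0)=0$ shows that $f''(z_0)=A+B\,\chi'(z_0)$ depends affinely on $\chi'(z_0)$, with $B=2z_0(1-|u_0|^2)/(1-|z_0|^2)$. Hence $f''(z_0)$ ranges over the disk $\ov{\D}(A,|B|(1-|\lambda_0|^2)/(1-|z_0|^2))$, and recomposing $f=z\,T_{u_0}(T_{-z_0}\cdot\chi)$ gives the stated description of the region together with its boundary extremals.

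The conceptual architecture is short; the main obstacle will be the constant bookkeeping. Verifying that the radius $|B|(1-|\lambda_0|^2)/(1-|z_0|^2)$ collapses to $\rho_2(z_0,w_0,\lambda)$ is straightforward, but checking that the center equals $c_2(z_0,w_0,\lambda)=\frac{2(r^2-s^2)}{z_0^2(1-r^2)^2}\lambda(1-\ov{w_0}\lambda)$ is more delicate: it requires collecting the second-order contributions $T_{u_0}''(0)\,\psi'(z_0)^2$ and $T_{-z_0}''(z_0)\,\lambda_0$ against the fixed first-order term $2\phi'(z_0)$, and carefully distinguishing $z_0^2$ from $|z_0|^2=r^2$ so that the factor $1-\ov{w_0}\lambda$ emerges. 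I expect no genuine difficulty beyond this algebra, and a reassuring cross-check is that setting $\chi'(z_0)=0$ (the case $|\lambda|=1$) must return $f''(z_0)=A=c_2$, so the two cases of the theorem agree along $|\lambda|=1$.
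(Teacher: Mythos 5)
Your proof is correct, and it takes a genuinely different route from anything in this paper. You run an iterated Schwarz reduction (a Schur-type peeling): $f=z\phi$, $\psi=T_{-u_0}\circ\phi$, $\chi=\psi/T_{-z_0}$, so that the two hypotheses $f(z_0)=w_0$, $f'(z_0)=w_1$ are converted into the single interior-value constraint $\chi(z_0)=\lambda_0$, after which the inclusion, the covering, and the extremal characterization all drop out of the classical Schwarz Lemma applied to $T_{-\lambda_0}\circ\chi\circ T_{z_0}$; the key structural point, which you exploit but could state more explicitly, is that $f\mapsto\chi$ is a bijection between $\{f\in\H_0: f(z_0)=w_0,\ f'(z_0)=w_1\}$ and the self-maps $\chi$ of $\D$ with $\chi(z_0)=\lambda_0$, and this bijectivity is exactly what makes the covering and the ``only if'' direction free. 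I checked your bookkeeping: $B=2z_0(1-|u_0|^2)/(1-r^2)$ gives radius $|B|(1-|\lambda_0|^2)/(1-r^2)=\rho_2(z_0,w_0,\lambda)$, and the center $A=2\phi'(z_0)+z_0\bigl[T_{u_0}''(0)\psi'(z_0)^2+T_{u_0}'(0)T_{-z_0}''(z_0)\lambda_0\bigr]$ does collapse to $c_2(z_0,w_0,\lambda)$, using $\overline{z_0}=r^2/z_0$. By contrast, the paper quotes this lemma from \cite{rivard2013application} and \cite{chen_2019} without proof, and its own method (displayed for the fourth-order analogue in Corollary 2.2) is structurally different: a multi-point Schwarz--Pick inequality in Peschl's invariant derivatives supplies the inclusion together with its Blaschke-product equality case, the covering is proved separately by differentiating the explicit composition, and the extremals are identified by peeling off Blaschke factors. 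Your single-point reduction is more elementary and self-contained at order two (no invariant derivatives, no Blaschke-degree counting); the invariant-derivative route is what scales manageably to orders three and four, where the inequality half of your reduction would become the hard part. One caveat that is not your error: your identity $\chi(z_0)=\lambda_0$ uses the standard Dieudonn\'e radius $\rho_1=(|z_0|^2-|w_0|^2)/(|z_0|(1-|z_0|^2))$, whereas the paper's displayed $\rho_1$ has $1-|w_0|^2$ in the denominator; the latter is a misprint (the paper's own formula for $g'(r)$ in \eqref{condition2} forces $1-|z_0|^2$), so your computation is consistent with what the paper actually uses, and likewise the statement's ``$f'''(z_0)$'' in case (2) should read $f''(z_0)$, as you correctly assumed.
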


It is worth mentioning that the present author and Yanagihara \cite{chen2020} applied this consequence to precisely determine the variability region
$V(z_0,w_0)=\{f''(z_0):f\in\H_0, f(z_0)=w_0\}$.

In 2021, the present author \cite{chen2021} obtained the third-order Dieudonn\'e's Lemma as follows.
Denote $c_3(z_0,w_0,\lambda,\mu)$ and $\rho_3(z_0,w_0,\lambda,\mu)$ by
\begin{equation*}
\left\{
\begin{aligned}
c_3(z_0,w_0,\lambda,\mu)
&=\frac{6(r^2-s^2)}{z_0^3(1-r^2)^3}\left(\mathcal{A}+z_0\mu(1-|\lambda|^2)(1+r^2-2 \ov{w_0}\lambda-z_0 \ov{\lambda}\mu)\right);\\
\rho_3(z_0,w_0,\lambda,\mu)&=\frac{6(r^2-s^2)}{r(1-r^2)^3}(1-|\lambda|^2)(1-|\mu|^2),
\end{aligned}
\right.
\end{equation*}
where
$$\mathcal{A}=\ov{w_0}^2 \lambda^3-\ov{w_0}(1+r^2)\lambda^2+r^2\lambda.$$
\begin{letterthm}{\bf (The third-order Dieudonn\'e's Lemma)}
Let $z_0, w_0\in \mathbb{D}$, $\lambda, \mu \in \ov{\D}$ with $|w_0|=s<r=|z_0|$,
\begin{equation*}
\left\{
\begin{aligned}
w_1&= c_1(z_0,w_0)+\rho_1(z_0,w_0)\dfrac{r\lambda}{z_0};\\
w_2&=c_2(z_0,w_0,\lambda)+\rho_2(z_0,w_0,\lambda)\frac{r\mu}{z_0}.
\end{aligned}
\right.
\end{equation*}
 Suppose that $f\in\mathcal{H}_0$, $f(z_0) = w_0$, $f'(z_0)=w_1$ and $f''(z_0)=w_2$.
Set $u_0=w_0/z_0$, $\lambda_0=r^2\lambda/z_0^2$ and $\mu_0=r^2\mu/z_0^2$.
\begin{enumerate}
\item If $|\lambda|=1$, then $f'''(z_0)=c_3(z_0,w_0,\lambda,\mu)$ and $f(z)=z T_{u_0}(\lambda_0 T_{-z_0}(z))$.
\item If $|\lambda|<1$, $|\mu|=1$, then $f'''(z_0)=c_3(z_0,w_0,\lambda,\mu)$ and $f(z)=z T_{u_0}\l(T_{-z_0}(z) T_{\lambda_0}(\mu_0 T_{-z_0}(z))\r)$.
\item If $|\lambda|<1$, $|\mu|<1$, then the region of values of $f'''(z_0)$ is the closed disk
\begin{align*}
&\overline{\D}(c_3(z_0,w_0,\lambda,\mu), \rho_3(z_0,w_0,\lambda,\mu))\\
&=\{z T_{u_0}\left(T_{-z_0}(z) T_{\lambda_0}(T_{-z_0}(z) T_{\mu_0}(\alpha T_{-z_0}(z)))\right):\alpha\in \overline{\D}\}.
\end{align*}
Furthermore, $f'''(z_0)\in \p\D(c, \rho)$ if and only if\\
$f(z)=z T_{u_0}\left(T_{-z_0}(z) T_{\lambda_0}(T_{-z_0}(z) T_{\mu_0}(\e T_{-z_0}(z)))\right)$, where $\theta \in \R$.
\end{enumerate}
\end{letterthm}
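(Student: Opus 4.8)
The plan is to prove the statement by the iterated Schwarz--Pick (Schur) peeling algorithm at $z_0$, performing one peeling step more than in the second-order Dieudonn\'e's Lemma and then differentiating the resulting nested representation. The engine is a single peeling step: if $G:\D\to\ov{\D}$ is holomorphic with $G(z_0)=a\in\D$, then $T_{-a}(G(z))$ and the Blaschke factor $T_{-z_0}(z)$ both vanish at $z_0$, so
$$\widehat G(z)=\frac{T_{-a}\big(G(z)\big)}{T_{-z_0}(z)}$$
extends holomorphically across $z_0$ and, by the Schwarz--Pick Lemma, is again a self-map of $\D$; equivalently $G(z)=T_a\big(T_{-z_0}(z)\,\widehat G(z)\big)$. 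I also record the standard fact that a self-map of $\D$ whose modulus equals $1$ at the interior point $z_0$ must be a unimodular constant.

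First I would put $g_0=f/z$, which is a self-map of $\D$ with $g_0(z_0)=u_0$ by Schwarz's Lemma, and then apply the peeling step three times to produce self-maps $g_1,g_2,g_3$. Using the chain rule together with $T_{-z_0}'(z_0)=1/(1-r^2)$, I would verify that the normalizations built into the hypotheses amount exactly to $g_1(z_0)=\lambda_0$ and $g_2(z_0)=\mu_0$; this is where the formulas for $w_1$ and $w_2$ in terms of $\lambda$ and $\mu$ are used, and it reconciles the construction with Dieudonn\'e's Lemma and the second-order Dieudonn\'e's Lemma. After the three peelings every admissible $f$ acquires the nested form
$$f(z)=z\,T_{u_0}\!\Big(T_{-z_0}(z)\,T_{\lambda_0}\big(T_{-z_0}(z)\,T_{\mu_0}(T_{-z_0}(z)\,g_3(z))\big)\Big),$$
with $g_3$ an arbitrary self-map of $\D$; conversely every $f$ of this form is admissible.

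The crux is to express $f'''(z_0)$ through this fourfold composition. An order count via the Leibniz and Fa\`a di Bruno formulas shows that $f'''(z_0)$ depends on $g_3$ only through $\alpha:=g_3(z_0)$, since each factor $T_{-z_0}(z)$ has a simple zero at $z_0$ and hence drops one order of the relevant jet at every level; moreover the dependence is \emph{affine}, $f'''(z_0)=c_3+(\text{linear coefficient})\,\alpha$, and a modulus computation identifies the constant term with $c_3(z_0,w_0,\lambda,\mu)$ and the modulus of the linear coefficient with $\rho_3(z_0,w_0,\lambda,\mu)$. As $g_3$ ranges over all self-maps, $\alpha$ fills $\ov{\D}$, so $f'''(z_0)$ fills $\ov{\D}(c_3,\rho_3)$, which is case (3); the boundary $|\alpha|=1$ forces $g_3\equiv\e$, giving the stated extremal functions. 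The degenerate cases fall out of the same representation: if $|\lambda|=1$ then $|\lambda_0|=1$, so $g_1\equiv\lambda_0$ and $f(z)=zT_{u_0}(\lambda_0 T_{-z_0}(z))$ is completely determined, which is case (1); if $|\lambda|<1$ but $|\mu|=1$ then $|\mu_0|=1$ forces $g_2\equiv\mu_0$, again pinning down $f$ and giving case (2); in both $\rho_3=0$ and $f'''(z_0)=c_3$.

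I expect the main obstacle to be the explicit third-order differentiation of the nested composition and the verification that the emerging center and radius coincide with the closed forms $c_3$ and $\rho_3$, in particular the cubic polynomial $\mathcal{A}$ appearing in $c_3$; this is a long but mechanical Fa\`a di Bruno calculation. The one conceptual point requiring care is the claim that $f'''(z_0)$ is insensitive to the higher derivatives of $g_3$ at $z_0$, which I would settle by the order-counting argument above rather than by direct expansion.
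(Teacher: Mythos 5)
Your proposal is correct, but it takes a genuinely different route from the one used in this paper (and, for the statement at hand, in \cite{chen2021}, of which the present paper's proof of the fourth-order case is the template). The paper splits the work into two independent halves: containment of the variability region in the disk is imported from a sharp multi-point Schwarz--Pick inequality in Peschl's invariant derivatives (Lemma \ref{lem:cho}, due to Cho, Kim and Sugawa \cite{cho2012multi}), whose equality case also says the extremizers are Blaschke products of bounded degree; coverage of the disk is then proved separately, by inserting the nested compositions and grinding through the chain-rule computations; finally the explicit form of the extremal functions is recovered from the Blaschke products by a peeling argument. You instead put the peeling first: three Schur/Schwarz--Pick peeling steps give a universal representation of \emph{every} admissible $f$ in the nested form with a free holomorphic parameter $g_3$, and your order-counting observation --- valid precisely because each factor $T_{-z_0}$ vanishes at $z_0$ and therefore annihilates the top-order jet of the next level --- shows that $f'''(z_0)$ depends on $g_3$ only through $\alpha=g_3(z_0)$, and affinely so. This yields containment and coverage in one stroke, with the boundary case handled by the maximum principle ($|\alpha|=1$ forces $g_3$ to be a unimodular constant). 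What your route buys is self-containedness and transparency: no invariant derivatives, no external inequality, and an immediate structural reason why the region is a disk. What the paper's route buys is that the hard analytic inequality and its rigidity statement come prepackaged from the literature, so only the coverage computation must be done by hand; note also that the paper's identification of extremal functions uses essentially your peeling device, just applied a posteriori to Blaschke products. Both routes share the same residual mechanical burden you flag: verifying $g_1(z_0)=\lambda_0$ and $g_2(z_0)=\mu_0$ (for this one must use $\rho_1=(r^2-s^2)/\bigl(r(1-r^2)\bigr)$, the form the paper actually employs in its computations, rather than the misprinted denominator $1-|w_0|^2$ in the introduction) and matching the affine map $\alpha\mapsto f'''(z_0)$ against the closed forms of $c_3$ and $\rho_3$, including the degenerate cases $|\lambda|=1$ and $|\mu|=1$.
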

Before the statement of our main result,
we denote $c_4(z_0,w_0,\lambda,\mu,\tau)$ and $\rho_4(z_0,w_0,\lambda,\mu,\tau)$ by
\begin{small}
\begin{equation}
\left\{
\begin{aligned}
c_4(z_0,w_0,\lambda,\mu,\tau)&=\frac{24(r^2-s^2)}{z_0^4(1-r^2)^4}[\mathcal{B}+z_0^2\tau(1-|\lambda|^2)(1-|\mu|^2)(1+2r^2-2\overline{w_0}\lambda-2z_0\bar{\lambda}\mu-z_0\bar{\mu}\tau)];\\
\rho_4(z_0,w_0,\lambda,\mu,\tau)&=\frac{24(r^2-s^2)}{r(1-r^2)^4}(1-|\lambda|^2)(1-|\mu|^2)(1-|\tau|^2),
\end{aligned}
\right.
\end{equation}
\end{small}
where
\begin{small}
\begin{equation}\label{eq:AB}
\begin{aligned}
\mathcal{B}&=\lambda r^6-\overline{w_0}^3\lambda^4-3\overline{w_0}^2\lambda^2(-\overline{w_0}\lambda^2+r^2\lambda+z_0\mu(1-|\lambda|^2))\\
&\quad +(1-r^2-2\overline{w_0}\lambda)(\lambda(\overline{w_0}\lambda-r^2)^2+z_0\mu(1-|\lambda|^2)(2r^2-2\overline{w_0}\lambda-z_0\bar{\lambda}\mu))\\
&\quad-\overline{w_0}(-\overline{w_0}\lambda^2+r^2\lambda+z_0\mu(1-|\lambda|^2))^2+z_0^3(1-|\lambda|^2)(\overline{\lambda}^2\mu^3+3\overline{z_0}^2\mu-3\overline{z_0}\overline{\lambda}\mu^2)
\end{aligned}
\end{equation}
\end{small}

\begin{thm}[ The fourth-order  Dieudonn\'e's Lemma]\label{thm:lemfourth}
Let $z_0, w_0\in \mathbb{D}$, $\lambda, \mu, \tau \in \ov{\D}$ with $|w_0|=s<r=|z_0|$,
\begin{equation*}
\left\{
\begin{aligned}
w_1&= c_1(z_0,w_0)+\rho_1(z_0,w_0)\dfrac{r\lambda}{z_0};\\
w_2&=c_2(z_0,w_0,\lambda)+\rho_2(z_0,w_0,\lambda)\frac{r\mu}{z_0};\\
w_3&=c_3(z_0,w_0,\lambda,\mu)+\rho_3(z_0,w_0,\lambda,\mu)\frac{r\tau}{z_0}.
\end{aligned}
\right.
\end{equation*}
Suppose that $f\in\mathcal{H}_0$, $f(z_0) = w_0$, $f'(z_0)=w_1$, $f''(z_0)=w_2$, $f'''(z_0)=w_3$.
Set $u_0=w_0/z_0$, $\lambda_0=r^2\lambda/z_0^2$, $\mu_0=r^2\mu/z_0^2$ and $\tau_0=r^2\tau/z_0^2$.
\begin{enumerate}
\item If $|\lambda|=1$, then $f^{(4)}(z_0)=c_4(z_0,w_0,\lambda,\mu,\tau)$ and $f(z)=z T_{u_0}(\lambda_0 T_{-z_0}(z))$.

\item If $|\lambda|<1$, $|\mu|=1$, then $f^{(4)}(z_0)=c_4(z_0,w_0,\lambda,\mu,\tau)$ and $f(z)=z T_{u_0}\l(T_{-z_0}(z) T_{\lambda_0}(\mu_0 T_{-z_0}(z))\r)$.
\item If $|\lambda|<1$, $|\mu|<1$, $|\tau|=1$, then $f^{(4)}(r)=c_4(z_0,w_0,\lambda,\mu,\tau)$ and
    $f(z)=z T_{u_0}\left(T_{-z_0}(z) T_{\lambda_0}(T_{-z_0}(z) T_{\mu_0}(\tau_0 T_{-z_0}(z)))\right)$.
\item If $|\lambda|<1$, $|\mu|<1$, $|\tau|<1$, then the region of values of $f^{(4)}(z_0)$ is the closed disk
\begin{small}
\begin{align*}
&\overline{\D}(c_4(z_0,w_0,\lambda,\mu,\tau), \rho_4(z_0,w_0,\lambda,\mu,\tau))\\
&=\{z T_{u_0}\left(T_{-z_0}(z) T_{\lambda_0}(T_{-z_0}(z) T_{\mu_0}(T_{-z_0}(z)T_{\tau_0 }(\alpha T_{-z_0}(z))))\right):\alpha\in\overline{\D} \}.
\end{align*}
\end{small}
Furthermore, $f^{(4)}(z_0)\in \p\D(c_4, \rho_4)$ if and only if\\
$f(z)=z T_{u_0}\left(T_{-z_0}(z) T_{\lambda_0}(T_{-z_0}(z) T_{\mu_0}(T_{-z_0}(z)T_{\tau_0 }(e^{i \theta} T_{-z_0}(z))))\right)$,where $\theta \in \R$.
\end{enumerate}
\end{thm}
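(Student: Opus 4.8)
The plan is to prove Theorem~\ref{thm:lemfourth} by pushing the Schur-type peeling that already underlies the lower-order lemmas one step further. Starting from $f\in\H_0$, Schwarz's Lemma writes $f(z)=z\,\omega_1(z)$ with $\omega_1\colon\D\to\ov{\D}$ and $\omega_1(z_0)=w_0/z_0=u_0$; here the hypothesis $|w_0|<|z_0|$ guarantees $|u_0|<1$, so the first Schur parameter is interior. Since $\omega_1(z_0)=T_{u_0}(0)$, the self-map $T_{-u_0}\circ\omega_1$ vanishes at $z_0$, and dividing out the Blaschke factor $T_{-z_0}$ (the generalized Schwarz Lemma) yields a self-map $\omega_2$ with $\omega_1(z)=T_{u_0}(T_{-z_0}(z)\,\omega_2(z))$. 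Iterating this four times produces self-maps $\omega_2,\omega_3,\omega_4,\omega_5$ of $\D$ and the representation
$$f(z)=z\,T_{u_0}\left(T_{-z_0}(z)\,T_{\lambda_0}(T_{-z_0}(z)\,T_{\mu_0}(T_{-z_0}(z)\,T_{\tau_0}(T_{-z_0}(z)\,\omega_5(z))))\right).$$
The first task is to check, level by level, that the data $f(z_0)=w_0$, $f'(z_0)=w_1$, $f''(z_0)=w_2$, $f'''(z_0)=w_3$ force $\omega_1(z_0)=u_0$, $\omega_2(z_0)=\lambda_0$, $\omega_3(z_0)=\mu_0$, $\omega_4(z_0)=\tau_0$; this is exactly what Dieudonn\'e's Lemma and the second- and third-order lemmas assert when read along the chain (their boundary extremals are precisely the constant choices $\omega_2\equiv\lambda_0$, etc.), and it pins every $\omega_k(z_0)$ while leaving $\omega_5$ unconstrained.

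Next I would dispose of the degenerate cases (1)--(3). The defining feature of the peeling is that if one of the prescribed values $\lambda_0,\mu_0,\tau_0$ has modulus one, i.e.\ $|\lambda|=1$, $|\mu|=1$, or $|\tau|=1$, then the corresponding self-map attains modulus one at the interior point $z_0$ and hence is a unimodular constant by the maximum principle. This truncates the chain and collapses $f$ to the single explicit product displayed in each case, so that $f^{(4)}(z_0)$ takes the lone value $c_4$; consistently, $\rho_4$ carries the factors $(1-|\lambda|^2)$, $(1-|\mu|^2)$, $(1-|\tau|^2)$ and therefore vanishes in each of these situations.

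For the generic case (4), $\omega_5$ is a genuinely free self-map of $\D$, and I would compute $f^{(4)}(z_0)$ from the representation using the Leibniz rule for $f=z\,\omega_1$ together with the Fa\`a di Bruno formula for each composition $\omega_k=T_{a_k}(T_{-z_0}\,\omega_{k+1})$. The structural reason the answer is a disk is that every inner factor $T_{-z_0}(z)$ vanishes at $z_0$: differentiating such a relation at $z_0$ drops one order, so successively $f^{(4)}(z_0)$ needs $\omega_1$ to order $4$, $\omega_2$ to order $3$, $\omega_3$ to order $2$, $\omega_4$ to order $1$, and $\omega_5$ only to order $0$. Consequently $f^{(4)}(z_0)$ depends on $\omega_5$ through the single value $\alpha:=\omega_5(z_0)$, and, tracing the chain, only \emph{affinely}: $f^{(4)}(z_0)=A\alpha+B$ for constants $A,B$ determined by $z_0,w_0,\lambda,\mu,\tau$. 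As $\omega_5$ varies, $\alpha$ fills the whole closed disk $\ov{\D}$, so the value set is exactly $\ov{\D}(B,|A|)$, and the boundary is attained precisely when $|\alpha|=1$, i.e.\ $\omega_5\equiv e^{i\theta}$, which yields the stated extremal functions. It remains to identify $B=c_4$ and $|A|=\rho_4$.

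The radius is the easy half: keeping only the $\alpha$-linear contribution through the chain gives $A=z_0(1-|u_0|^2)\,24\,(T_{-z_0}'(z_0))^4(1-|\lambda_0|^2)(1-|\mu_0|^2)(1-|\tau_0|^2)$, which, via $T_{-z_0}'(z_0)=(1-|z_0|^2)^{-1}$, $1-|u_0|^2=(r^2-s^2)/r^2$, and $|\lambda_0|=|\lambda|$, $|\mu_0|=|\mu|$, $|\tau_0|=|\tau|$, collapses to the displayed $\rho_4$. The \textbf{main obstacle} is the center: identifying the constant term $B$ (the value at $\alpha=0$, i.e.\ $\omega_5\equiv0$) with $c_4$ amounts to verifying the long polynomial $\mathcal{B}$ in \eqref{eq:AB}. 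Rather than expanding the fourfold composition from scratch, I would organize this inductively, treating the peeled map $\omega_2$ as a self-map whose first three derivatives at $z_0$ are fixed and whose third derivative is governed by the \emph{same} disk mechanism one order down; the third-order data $c_3,\mathcal{A}$ then reappear as building blocks of $c_4,\mathcal{B}$, and the remaining work is a finite---if lengthy---simplification to match \eqref{eq:AB}.
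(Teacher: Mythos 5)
Your proposal is correct in substance, but it reaches the theorem by a genuinely different route from the paper. The paper's containment step is not structural: it invokes the fourth-order multi-point Schwarz--Pick inequality of Cho--Kim--Sugawa (Lemma~\ref{lem:cho}), phrased through Peschl's invariant derivatives of $g(z)=f(z)/z$, to obtain $|f^{(4)}(r)-c_4|\le\rho_4$ in one stroke together with its equality case (Blaschke products of degree at most $4$); the disk is then covered by a separate explicit computation for the parametrized family, and the extremal functions are identified by peeling degree-$4$ Blaschke products. You instead run the Schur-type peeling on an \emph{arbitrary} competitor $f$, so that containment, covering, and boundary rigidity all follow from the single observation that $f^{(4)}(z_0)=B+A\,\omega_5(z_0)$ depends affinely on the one free value $\omega_5(z_0)\in\ov{\D}$, with the maximum principle handling both the degenerate cases (1)--(3) and the characterization $\omega_5\equiv e^{i\theta}$ of boundary points; this is more elementary and self-contained, requiring neither Peschl derivatives nor the cited inequality of \cite{cho2012multi}. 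What each approach buys: the paper imports the hard analytic inequality and its equality case from the literature, while you must carry out the fourfold differentiation yourself to identify $B=c_4$ (the polynomial $\mathcal{B}$) and the collapsed values in cases (1)--(3) --- but that is exactly the computation the paper performs anyway in the proof of Corollary~\ref{cor:third}, and your affineness argument shows it suffices to do it for the constant choice $\omega_5\equiv\alpha$, i.e.\ for the same explicit family, so nothing in your outline would fail. Two details to tighten when writing it up: the forced values $\omega_2(z_0)=\lambda_0$, $\omega_3(z_0)=\mu_0$, $\omega_4(z_0)=\tau_0$ come from directly differentiating the peeled relations once at each level (affine, invertible maps), not literally from the boundary-extremal statements of the lower-order lemmas as your parenthesis suggests; and your radius bookkeeping is indeed consistent, since $|A|=24\,r\,\frac{r^2-s^2}{r^2}\,(1-r^2)^{-4}(1-|\lambda|^2)(1-|\mu|^2)(1-|\tau|^2)=\rho_4$.
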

Naturally, we shall further study the fourth order derivative $f^{(4)}$ of $f\in \H_0$,which leads to establishing a fourth-order Dieudonn\'e's Lemma, then apply our result to determine the region of values of $f^{(4)}(z_0)$, $f\in\H_0$, in terms of $z_0, f(z_0),f'(z_0),f''(z_0),f'''(z_0)$, and give the form of all the extremal functions.
We believe that the study on the fourth derivatives of bounded analytic functions could serve as a basis for further investigations on the variability regions of higher derivatives.

\section{Proof of the fourth-order Dieudonn\'e's Lemma}
We begin this section with the introduction to Peschl's invariant derivatives.
For $g:\mathbb{D}\to \mathbb{D}$ holomorphic, the so-called Peschl's invariant derivatives $D_n g(z)$ are defined by the Taylor series expansion (see \cite{peschl1955invariants}):
$$z\ra h(z):=\frac{g(\frac{z+z_0}{1+\overline{z}_0 z})-g(z_0)}{1-\overline{g(z_0)}g(\frac{z+z_0}{1+\overline{z}_0 z})}=\sum_{n=1}^{\infty}\frac{D_n g(z_0)}{n!}z^n,\quad z, z_0\in \D,$$
where $D_n g(z_0)=h^{(n)}(0)$.

Precise forms of $D_n g(z)$, $n=1,2,3,4$, are expressed by
\begin{small}
\begin{align*}
D_1 g(z)&=\frac{(1-\abs{z}^2)g'(z)}{1-\abs{g(z)}^2},\\
D_2 g(z)&=\frac{(1-\abs{z}^2)^2}{1-\abs{g(z)}^2}
\Bigg[g''(z)-\frac{2\overline{z}g'(z)}{1-\abs{z}^2}
+\frac{2\overline{g(z)}g'(z)^2} {1-\abs{g(z)}^2}\Bigg],\\
D_3 g(z)&=\frac{(1-\abs{z}^2)^3}{1-\abs{g(z)}^2}
\Bigg[g'''(z)-\frac{6\overline{z}g''(z)}{1-\abs{z}^2}
+\frac{6\overline{g(z)}g'(z)g''(z)} {1-\abs{g(z)}^2}
+\frac{6\overline{z}^2g'(z)}{(1-\abs{z}^2)^2}\\
&\qquad\qquad -\frac{12\overline{g(z)}g'(z)^2} {(1-\abs{z}^2)(1-\abs{g(z)}^2)}
+\frac{6\overline{g(z)}^2g'(z)^3} {(1-\abs{g(z)}^2)^2}\Bigg],\\
D_4 g(z)&=\frac{(1-|z|^2)^4}{1-|g(z)|^2}\Bigg[ g^{(4)}(z)-\frac{12\bar{z} g^{(3)}(z)}{1-|z|^2}+\frac{6 \overline{g(z)} g''(z)^2}{1-|g(z)|^2}+\frac{36\bar{z}^2 g''(z)}{(1-|z|^2)^2}\\
&\qquad\qquad\qquad+\frac{24 \overline{g(z)}^3 g'(z)^4}{(1-|g(z)|^2)^3}-\frac{72\bar{z}\overline{g(z)}^2 g'(z)^3}{(1-|z|^2)(1-|g(z)|^2)^2}+\frac{72\bar{z}^2 \overline{g(z)} g'(z)^2}{(1-|z|^2)^2(1-|g(z)|^2)}\\
&\qquad-\frac{24 \bar{z}^3 g'(z)}{(1-|z|^2)^3}+\frac{8\overline{g(z)} g'(z)g^{(3)}(z) }{1-|g(z)|^2}+\frac{36 \overline{g(z)}^2 g'(z)^2 g''(z)}{(1-|g(z)|^2)^2}-\frac{72\bar{z} \overline{g(z)} g'(z) g''(z)}{(1-|z|^2)(1-|g(z)|^2)}\Bigg].
\end{align*}
\end{small}
In 2012, Cho, Kim and Sugawa \cite{cho2012multi} obtained the following inequality in terms of Peschl's
invariant derivatives, we shall interpret it as an inequality for $g^{(4)}(z)$ in terms of $z, g(z), g'(z),  g''(z)$ and $g'''(z)$.
\begin{lem}[\cite{cho2012multi}]\label{lem:cho}
If $g:\mathbb{D}\to \mathbb{D}$ is holomorphic, then
\begin{equation}
\begin{aligned}
\frac{D_4 g(z)}{24}\left[(1-|D_1 g(z)|^2)^2-\left|\frac{D_2 g(z)}{2}\right|^2\right]&+2\overline{D_1 g(z)}\frac{D_2 g(z)}{2}\frac{D_3 g(z)}{6}(1-|D_1 g(z)|^2)\\
+\overline{D_1 g(z)}^2\left(\frac{D_2 g(z)}{2}\right)^3+&\overline{\frac{D_2 g(z)}{2}}\left(\frac{D_3 g(z)}{6}\right)^2\\
\leq(1-|D_1 g(z)|^2)^3-(1-|D_1 g(z)|^2)&\left(\left|\frac{D_3 g(z)}{6}\right|^2+2\left|\frac{D_2 g(z)}{2}\right|^2\right)+\left|\frac{D_2 g(z)}{2}\right|^4\\
-D_1 g(z)\left(\overline{\frac{D_2 g(z)}{2}}\right)^2&\frac{D_3 g(z)}{6}-\overline{D_1 g(z)}\left(\frac{D_2 g(z)}{2}\right)^2\overline{\frac{D_3 g(z)}{6}},
\end{aligned}
\end{equation}
equality holds for a point $z\in \mathbb{D}$ if and only if $g$ is a Blaschke product of degree at most 4.
\end{lem}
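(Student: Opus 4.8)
The plan is to recast the inequality as a statement about the Taylor coefficients of a normalized Schwarz function and then identify it with the Schur--Cohn positivity condition governing the last prescribed coefficient. First, fix $z\in\D$ and introduce the automorphisms $\phi(\zeta)=(\zeta+z)/(1+\ov z\,\zeta)$ and $\psi(w)=(w-g(z))/(1-\ov{g(z)}\,w)$, so that $h:=\psi\circ g\circ\phi$ is a holomorphic self-map of $\D$ with $h(0)=0$. By the defining expansion of the Peschl derivatives, the Taylor coefficients of $h$ are exactly $a_n:=D_n g(z)/n!$; in particular $a_1=D_1g(z)$, $a_2=D_2g(z)/2$, $a_3=D_3g(z)/6$ and $a_4=D_4g(z)/24$. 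Thus the asserted inequality is a relation purely among $a_1,a_2,a_3,a_4$, the first four coefficients of an arbitrary self-map normalized by $h(0)=0$. (I read the left-hand side in modulus, since it is complex-valued whereas the right-hand side is real; it has the shape $|a_4P+Q|\le R$ with $P=(1-|a_1|^2)^2-|a_2|^2$ and $Q,R$ the displayed expressions in $a_1,a_2,a_3$.) Since $\phi$ and $\psi$ are disk automorphisms, $g$ is a Blaschke product of degree at most $4$ if and only if $h$ is, which reduces the equality clause to the same clause for $h$.

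Next I would run the Schur algorithm on $h$. As $h(0)=0$, Schwarz's Lemma makes $h_1:=h/z$ a self-map with $h_1(0)=a_1$, and iterating $h_{k+1}(\zeta)=\zeta^{-1}(h_k(\zeta)-\gamma_k)/(1-\ov{\gamma_k}h_k(\zeta))$ with $\gamma_k=h_k(0)$ produces Schur parameters $\gamma_1,\gamma_2,\gamma_3,\gamma_4\in\ov{\D}$; the coefficients $a_1,a_2,a_3,a_4$ are explicit polynomials in them, and admissibility of the quadruple is equivalent to $|\gamma_j|\le1$ for every $j$. With $a_1,a_2,a_3$ (hence $\gamma_1,\gamma_2,\gamma_3$, assuming they lie in $\D$) held fixed, $a_4$ depends affinely on the free parameter, $a_4=\alpha+\beta\gamma_4$ with $|\beta|=(1-|\gamma_1|^2)(1-|\gamma_2|^2)(1-|\gamma_3|^2)$; as $\gamma_4$ runs over $\ov{\D}$, $a_4$ fills the closed disk $\ov{\D}(\alpha,|\beta|)$. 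A useful consistency check is that $(1-|a_1|^2)^2-|a_2|^2=(1-|\gamma_1|^2)^2(1-|\gamma_2|^2)\ge0$, so $P\ge0$ on the coefficient body and the disk is nondegenerate exactly when $\gamma_1,\gamma_2\in\D$.

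The core of the argument is then to show that, after clearing denominators, the membership $a_4\in\ov{\D}(\alpha,|\beta|)$, i.e. $|a_4-\alpha|\le|\beta|$, is literally $|a_4P+Q|\le R$: one must match the center $-Q/P=\alpha$ and the radius $R/P=|\beta|$ by substituting the Schur formulas for $\alpha,\beta$ in terms of $\gamma_1,\gamma_2,\gamma_3$ and re-expressing everything through $a_1,a_2,a_3$. Equivalently, and perhaps more cleanly, one can bypass the recursion: the existence of a self-map with $h(0)=0$ and prescribed $a_1,\dots,a_4$ is equivalent to positive semidefiniteness of the $4\times4$ Hermitian Schur--Cohn (Toeplitz-type) matrix $M$ formed from the coefficients, and the inequality is precisely the nonnegativity of its last Schur complement, the lower principal minors being automatically nonnegative on the coefficient body.

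Finally, equality forces the free parameter to the boundary, $|\gamma_4|=1$ (or some earlier $|\gamma_j|=1$), at which stage the Schur algorithm terminates and $h$, hence $g$, is a finite Blaschke product of degree at most $4$; conversely any such product realizes equality, and by the first paragraph the same holds for $g$. The hard part will be exactly this algebraic identification in the core step: carrying the Schur substitution (or the expansion of $\det M$) far enough to see the messy combinations of $D_1,\dots,D_4$ collapse to the precise polynomials $P,Q,R$ displayed in the lemma.
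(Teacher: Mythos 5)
The paper offers no proof of this lemma at all: it is imported verbatim from \cite{cho2012multi}, where it follows from the multi-point Schwarz--Pick lemma via iterated hyperbolic divided differences. Your Schur-algorithm argument is that same mechanism in classical coordinates, so your route is in substance the one in the cited source: normalizing by the two automorphisms so that $a_n=D_ng(z)/n!$ are the Taylor coefficients of a Schwarz function $h$ is exactly what the defining expansion of the Peschl derivatives already encodes, and your iterates $h_k$ with parameters $\gamma_k$ are the invariant difference quotients of \cite{cho2012multi} in disguise. Two further remarks. First, you correctly diagnosed the typo in the statement: the left-hand side must carry a modulus, as it does in the paper's own application right after the lemma, so your reading $|a_4P+Q|\le R$ with $P=(1-|a_1|^2)^2-|a_2|^2$ is the intended inequality. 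Second, the step you defer (matching center and radius) is the entire analytic content, but it does collapse as you predict: substituting $a_1=\gamma_1$, $a_2=\gamma_2(1-|\gamma_1|^2)$, $a_3=(1-|\gamma_1|^2)\left(\gamma_3(1-|\gamma_2|^2)-\overline{\gamma_1}\gamma_2^2\right)$ into the displayed right-hand side, the terms involving $\Re\left(\gamma_1\gamma_3\overline{\gamma_2}^2\right)$ cancel and one gets $R=(1-|\gamma_1|^2)^3(1-|\gamma_2|^2)^2(1-|\gamma_3|^2)=P\,|\beta|$, consistent with your $P=(1-|\gamma_1|^2)^2(1-|\gamma_2|^2)$ and $|\beta|=\prod_{j\le 3}(1-|\gamma_j|^2)$; the center matches by the same substitution (e.g.\ with $\gamma_3=\gamma_4=0$ one finds $a_4=\overline{\gamma_1}^2\gamma_2^3(1-|\gamma_1|^2)=-Q/P$ exactly). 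With the identity $a_4P+Q=P\beta\gamma_4$ in hand, the equality analysis is clean, but you should make the degenerate case explicit rather than parenthetical: if $R>0$, equality forces $|\gamma_4|=1$ and $h$ is Blaschke of degree exactly $4$; if some $|\gamma_j|=1$ with $j\le 3$, the algorithm terminates, $h$ is Blaschke of degree $j$, and both sides vanish identically --- this is precisely why the lemma says degree \emph{at most} $4$, and also why the ``only if'' direction needs the observation that a Blaschke product of degree $\ge 5$ has $\gamma_1,\dots,\gamma_4\in\D$ and hence gives strict inequality.
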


To simplify the proof of Theorem \ref{thm:lemfourth}, we consider the following relations. Assume that $z_0=re^{i \vp}, w_0=s e^{i \xi}\in \D$ with $s<r$, define
the `rotation function' $\tilde{f}$ by
$\tilde{f}(z)=e^{-i \xi}f(e^{i \vp} z)$, then we obtain $\tilde{f}'(r)=e^{i(\vp-\xi)}f'(z_0)$, $\tilde{f}''(r)=e^{i(2\vp-\xi)}f''(z_0)$ and $\tilde{f}'''(r)=e^{i(3\vp-\xi)}f'''(z_0)$, $\tilde{f}^{(4)}(r)=e^{i(4\vp-\xi)}f^{(4)}(z_0)$.
Therefore, we can relabel $\tilde{f}$ as $f$, and assume that
\begin{align*}
z_0&=r,w_0=s,\\
w_1&=c_1(r,s)+\rho_1(r,s)\lambda, \quad \lambda\in \ov{\D},\\
w_2&=c_2(r,s,\lambda)+\rho_2(r,s,\lambda)\mu,\quad \mu\in \ov{\D},\\
w_3&=c_3(r,s,\lambda,\mu)+\rho_2(r,s,\lambda,\mu)\tau,\quad \tau\in \ov{\D}.
\end{align*}
Correspondingly,
$c_4(r,s,\lambda,\mu,\tau)$ and $\rho_4(r,s,\lambda,\mu,\tau)$ are denoted by
\begin{equation}
\left\{
\begin{aligned}
c_4(r,s,\lambda,\mu,\tau)&=A[B+r^2\tau(1-|\lambda|^2)(1-|\mu|^2)(1+2r^2-2s\lambda-2r\bar{\lambda}\mu-r\bar{\mu}\tau)];\\
\rho_4(r,s,\lambda,\mu,\tau)&=Ar^3(1-|\lambda|^2)(1-|\mu|^2)(1-|\tau|^2),
\end{aligned}
\right.
\end{equation}
where
\begin{equation}\label{eq:AB}
\left\{
\begin{aligned}
A&=\frac{24(r^2-s^2)}{r^4(1-r^2)^4},\\
B&=\lambda r^6-s^3\lambda^4-3s^2\lambda^2(-s\lambda^2+r^2\lambda+r\mu(1-|\lambda|^2))\\
&\quad +(1-r^2-2s\lambda)(\lambda(s\lambda-r^2)^2+r\mu(1-|\lambda|^2)(2r^2-2s\lambda-r\bar{\lambda}\mu))\\
&\quad-s(-s\lambda^2+r^2\lambda+r\mu(1-|\lambda|^2))^2+r^3(1-|\lambda|^2)(\overline{\lambda}^2\mu^3+3r^2\mu-3r\overline{\lambda}\mu^2)
\end{aligned}
\right.
\end{equation}
Assume that $g(z)=f(z)/z$, then $g$ is an analytic self-mapping of $\D$. A straight computation shows that
$D_1 g(r)=\lambda$, $D_2 g(r)=2\mu(1-|\lambda|^2)$ and
$D_3 g(r)=6(1-|\lambda|^2)\left[-\overline{\lambda}\mu^2+\tau(1-|\mu|^2)\right]$.
From Lemma \ref{lem:cho}, we have
$$\left|\frac{D_4 g(r)}{24}+(1-|\lambda|^2)\left[-\overline{\lambda}^2\mu^3+(1-|\mu|^2)(2\overline{\lambda}\mu\tau+\overline{\mu}\tau^2)\right]\right|
\leq(1-|\lambda|^2)(1-|\mu|^2)(1-|\tau|^2),$$

Then we obtain
\begin{equation}\label{eq:f''''(r)}
|f^{(4)}(r)-c_4(r,s,\lambda,\mu,\tau)|\le \rho_4(r,s,\lambda,\mu,\tau).
\end{equation}
Equality in \eqref{eq:f''''(r)} holds if and only if $f(z)=zg(z)$, where
$g$ is a Blaschke product of degree 1, 2, 3 or 4 and satisfies

\begin{equation}\label{condition2}
\left\{
\begin{aligned}
g(r)&=\frac{s}{r};\\
g'(r)&=\frac{r^2-s^2}{r^2(1-r^2)}\lambda;\\
g''(r)&=\frac{2(r^2-s^2)}{r^3(1-r^2)^2}(-s\lambda^2+r^2\lambda+r\mu(1-|\lambda|^2));\\
g'''(r)&=\frac{6(r^2-s^2)}{r^4(1-r^2)^3}
\left[b+r^2\tau(1-|\lambda|^2)(1-|\mu|^2)\right].
\end{aligned}
\right.
\end{equation}
where
$$b=\lambda(s\lambda-r^2)^2+r\mu(1-|\lambda|^2)(2r^2-2s\lambda-r\bar{\lambda}\mu).$$
Then Theorem \ref{thm:lemfourth} is reduced to the following corollary.

\begin{cor}\label{cor:third}
Let $0\le s<r<1$, $\lambda, \mu, \tau \in \ov{\D}$ with
\begin{equation*}
\left\{
\begin{aligned}
w_1&=c_1(r,s)+\rho_1(r,s)\lambda, \\ w_2&=c_2(r,s,\lambda)+\rho_2(r,s,\lambda)\mu,\\
w_3&=c_3(r,s,\lambda,\mu)+\rho_3(r,s,\lambda,\mu)\tau.
\end{aligned}
\right.
\end{equation*}
 Suppose that $f\in\mathcal{H}_0$, $f(r)= s$, $f'(r)=w_1$, $f''(r)=w_2$ and $f'''(r)=w_3$. Set $u_0=s/r$.
\begin{enumerate}
\item If $|\lambda|=1$, then $f^{(4)}(r)=c_4(r,s,\lambda,\mu,\tau)$ and $f(z)=z T_{u_0}(\lambda T_{-r}(z))$.

\item If $|\lambda|<1$, $|\mu|=1$, then $f^{(4)}(r)=c_4(r,s,\lambda,\mu,\tau)$ and $f(z)=z T_{u_0}\l(T_{-r}(z) T_{\lambda}(\mu T_{-r}(z))\r)$.

\item If $|\lambda|<1$, $|\mu|<1$, $|\tau|=1$, then $f^{(4)}(r)=c_4(r,s,\lambda,\mu,\tau)$ and $f(z)=
    z T_{u_0}\left(T_{-r}(z) T_{\lambda}(T_{-r}(z) T_{\mu}(\tau T_{-r}(z)))\right)$.

\item If $|\lambda|<1$, $|\mu|<1$, $|\tau|<1$, then the region of values of $f^{(4)}(z_0)$ is the closed disk
\begin{align*}
\qquad&\overline{\D}(c_4(r,s,\lambda,\mu,\tau), \rho_4(r,s,\lambda,\mu,\tau))\\
&=\{z T_{u_0}\left(T_{-r}(z) T_{\lambda}(T_{-r}(z) T_{\mu}(T_{-r}(z) T_{\tau}(\alpha T_{-r}(z))))\right):\alpha\in \overline{\D}\}.
\end{align*}
Furthermore, $f^{(4)}(r)\in \p\D(c_4(r,s,\lambda,\mu,\tau), \rho_4(r,s,\lambda,\mu,\tau))$ if and only if
$f(z)=z T_{u_0}\left(T_{-r}(z) T_{\lambda}(T_{-r}(z) T_{\mu}(T_{-r}(z) T_{\tau}(e^{i \theta} T_{-r}(z))))\right)$, where $\theta \in \R$.
\end{enumerate}
\end{cor}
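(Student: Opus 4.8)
The containment $\{f^{(4)}(r):f\in\H_0,\,f(r)=s,\,f'(r)=w_1,\,f''(r)=w_2,\,f'''(r)=w_3\}\subseteq\overline{\D}(c_4,\rho_4)$ is already supplied by inequality \eqref{eq:f''''(r)}, so the substance of the proof is to realize \emph{every} value of the disk by an admissible $f$ and to isolate the extremal functions in each of the four cases. The plan is to run the iterated Schwarz--Pick (Schur) algorithm on $g(z)=f(z)/z$, which is a self-map of $\D$ by Schwarz's Lemma, peeling off the prescribed data at $z=r$ one order at a time.

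Concretely, since $g(r)=u_0$, the quotient $\omega_1(z):=T_{-u_0}(g(z))/T_{-r}(z)$ has a removable singularity at $z=r$ (numerator and denominator share the simple zero there) and defines a holomorphic self-map of $\D$; a direct computation with the Peschl derivatives gives $\omega_1(r)=D_1 g(r)=\lambda$. Iterating with $\omega_{k+1}(z):=T_{-s_k}(\omega_k(z))/T_{-r}(z)$, where $(s_1,s_2,s_3)=(\lambda,\mu,\tau)$, the relations between $w_1,w_2,w_3$ and $\lambda,\mu,\tau$ carried by $c_1,\rho_1,c_2,\rho_2,c_3,\rho_3$ collapse exactly to $\omega_2(r)=\mu$ and $\omega_3(r)=\tau$. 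After three steps the only remaining free datum is $\alpha:=\omega_4(r)\in\overline{\D}$; by construction $f^{(4)}(r)$ is an affine function of $\alpha$, and \eqref{eq:f''''(r)} identifies its disk of values as $\overline{\D}(c_4,\rho_4)$, with center $c_4$ attained at $\alpha=0$ and radius $\rho_4$.

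The degenerate cases follow from interior-to-boundary rigidity. If $|\lambda|=1$, then $\omega_1$ attains a unimodular value at the interior point $r$, so by the maximum principle $\omega_1\equiv\lambda$; unwinding the definition forces $g(z)=T_{u_0}(\lambda T_{-r}(z))$, whence $f(z)=zT_{u_0}(\lambda T_{-r}(z))$ and $f^{(4)}(r)=c_4$ (here $\rho_4=0$). The same argument applied to $\omega_2$ when $|\mu|=1$ and to $\omega_3$ when $|\tau|=1$ yields cases (2) and (3), each with a unique extremal function of the stated form. In the generic case $|\lambda|,|\mu|,|\tau|<1$ one has $(1-|\lambda|^2)(1-|\mu|^2)(1-|\tau|^2)>0$, so reversing the peeling via $\phi=T_a(T_{-r}\cdot\phi^{*})$ at each level reconstructs
\[
g(z)=T_{u_0}\left(T_{-r}(z) T_{\lambda}(T_{-r}(z) T_{\mu}(T_{-r}(z) T_{\tau}(\omega_4(z) T_{-r}(z))))\right),
\]
and the constant choices $\omega_4\equiv\alpha$ produce exactly the displayed Blaschke-product family $f=zg$. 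Since $f^{(4)}(r)=c_4+(\text{unimodular})\,\rho_4\,\alpha$ with $\rho_4>0$, the map $\alpha\mapsto f^{(4)}(r)$ is a similarity carrying $\overline{\D}$ onto $\overline{\D}(c_4,\rho_4)$; hence the region of values is the full closed disk, and $f^{(4)}(r)\in\partial\D(c_4,\rho_4)$ exactly when $|\alpha|=1$, i.e. $\alpha=e^{i\theta}$, giving the stated extremal functions.

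I expect the main obstacle to be the \emph{constant-matching}: verifying that $\omega_1(r),\omega_2(r),\omega_3(r)$ reduce to $\lambda,\mu,\tau$ under the relations carried by $c_k,\rho_k$, and that the affine coefficient of $\alpha$ in $f^{(4)}(r)$ has modulus exactly $\rho_4$ with constant term $c_4$ (the expansion \eqref{eq:AB}). This is a lengthy but mechanical computation with the explicit formulas for $D_n g$; the conceptual backbone is entirely the Schur algorithm together with the maximum-principle rigidity, which I would present first and relegate the algebra to a direct verification.
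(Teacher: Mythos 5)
Your proposal is correct, and its skeleton coincides with the paper's proof: containment from \eqref{eq:f''''(r)}, surjectivity via the same family $f(z)=z T_{u_0}\left(T_{-r}(z) T_{\lambda}(T_{-r}(z) T_{\mu}(T_{-r}(z)T_{\tau}(\alpha T_{-r}(z))))\right)$, and extremal functions by peeling off M\"obius factors --- indeed the paper's auxiliary maps $h$, $G$, $F$ are exactly your Schur transforms $\omega_1,\omega_2,\omega_3$ precomposed with $T_r$, and your deferred ``constant-matching'' is precisely the chain-rule computation \eqref{eq:T-g}--\eqref{appendix:g''''} that occupies most of the paper's proof, so flagging it as the main labor is accurate. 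The one genuinely different sub-step is the boundary characterization in case (4). The paper invokes the equality clause of Lemma \ref{lem:cho}: equality in \eqref{eq:f''''(r)} forces $g(z)=f(z)/z$ to be a Blaschke product of degree at most $4$ satisfying \eqref{condition2}, after which the peeling runs inside the class of finite Blaschke products, the degree dropping by one at each stage until an automorphism fixing $0$ remains. You instead peel an arbitrary admissible $f$, note that $f^{(4)}(r)$ depends on $\omega_4$ only through the single value $\alpha=\omega_4(r)$, and affinely so (because $T_{-r}(r)=0$ kills the top-order derivative of the inner factor at every level --- a one-line justification you should make explicit rather than leave as ``by construction''), and deduce from $f^{(4)}(r)\in\p\D(c_4,\rho_4)$ with $\rho_4>0$ that $|\omega_4(r)|=1$, whence $\omega_4$ is a unimodular constant by the maximum principle. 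Your route uses only the inequality part of Lemma \ref{lem:cho} (the affine relation even re-proves the containment on its own), and it treats the degenerate cases (1)--(3) by the same rigidity argument, which the paper does not write out but delegates to \cite{chen_2019} and \cite{chen2021}; the paper's route, in exchange, avoids having to verify that the Schur transforms of a general self-map are well-defined self-maps of $\ov{\D}$ (Schwarz--Pick plus the maximum principle, which your write-up should state), and obtains uniqueness from a quotable equality criterion. Either way the analytic core is identical, and your plan becomes a complete proof once the flagged computations are carried out.
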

\begin{proof}
We can easily prove Case (1), (2) and (3) by using the same method in the proof of \cite[Lemma 2.2]{chen_2019} or \cite[Theorem 1.1]{chen2021}.

For Case (4), the inequality \eqref{eq:f''''(r)} means that 
$$f^{(4)}(r) \in \overline{\D}(c_4(r,s,\lambda,\mu,\tau),\rho_4(r,s,\lambda,\mu,\tau)).$$
 To show that $\overline{\D}(c_4(r,s,\lambda,\mu,\tau),\rho_4(r,s,\lambda,\mu,\tau))$ is covered, let $\alpha \in \ov{\D}$, $u_0=s/r$ and set
 $f(z)=zg(z)$, where
 $$g(z)=T_{u_0}\left(T_{-r}(z) T_{\lambda}(T_{-r}(z) T_{\mu}(T_{-r}(z) T_{\tau}(\alpha T_{-r}(z))))\right).$$
Then $f(0)=0$ and $f(r)=s$. Next we need to show that $f'(r)=w_1$.
Let $u(z)=T_{-r}(z) T_{\mu}(v(z))$, $v(z)=T_{-r}(z) T_{\tau}(w(z))$, $w(z)=\alpha T_{-r}(z)$, then we have
\begin{equation}\label{eq:T-g}
T_{-u_0}\circ g(z)=T_{-r}(z) T_{\lambda}(u(z)).
\end{equation}
Differentiating both sides and using the chain rule, we get
\begin{equation}\label{g'}
\begin{aligned}
(T_{-u_0})'(g(z))g'(z)&=T_{-r}'(z) T_{\lambda}(u(z))
+T_{-r}(z) T_{\lambda}'(u(z))u'(z)
\end{aligned}
\end{equation}
for all $z\in \D$.
Substituting $z=r$ into this equation, we have
$$(T_{-u_0})'(g(r))g'(r)=T_{-r}'(r) T_{\lambda}(0),$$
which implies
$$g'(r)=\frac{(r^2-s^2)\lambda}{r^2(1-r^2)}.$$
Thus, we obtain that $f$ satisfies
$$f'(r)=g(r)+rg'(r)=w_1.$$
Similarly, differentiating both sides of \eqref{g'}, we obtain
\begin{equation}\label{appendix:g''}
\begin{aligned}
&(T_{-u_0})''(g(z)) (g'(z))^2+(T_{-u_0})'(g(z)) g''(z)\\
&=T_{-r}''(z) T_{\lambda}(u(z))+2 T_{-r}'(z) T_{\lambda}'(u(z)) u'(z)\\
&\quad+T_{-r}(z) \left(T_{\lambda}''(u(z))u'(z)^2 +T_{\lambda}'(u(z))u''(z) \right),\quad z\in \D.
\end{aligned}
\end{equation}
Substituting $z=r$ into the above equation,
\begin{align*}
&(T_{-u_0})''(g(r)) (g'(r))^2+(T_{-u_0})'(g(r))g''(r)\\
&=T_{-r}''(r) T_{\lambda}(0)+2T_{-r}'(r) T_{\lambda}'(0)u'(r).
\end{align*}
We get that
$$g''(r)=\frac{2(r^2-s^2)}{r^3(1-r^2)^2}(-s\lambda^2+r^2\lambda+r\mu(1-|\lambda|^2)).$$
The above with $
f''(z)=2g'(z)+z g''(z)$ immediately yields $f''(r)=w_2$.

Next we compute the value of $f'''(r)$. Differentiating both sides of \eqref{appendix:g''},
\begin{equation}\label{appendix:g'''}
\begin{aligned}
&(T_{-u_0})'''(g(z)) (g'(z))^3+3(T_{-u_0})''(g(z))g'(z) g''(z)+T_{-u_0}'(g(z)) g'''(z)\\
&=T_{-r}^{(3)}(z) T_{\lambda}(u(z))+3 T_{-r}''(z) T_{\lambda}'(u(z)) u'(z)+3 T_{-r}'(z) \left( T_{\lambda}''(u(z))u'(z)^2+T_{\lambda}'(u(z))u''(z)\right)\\
&+T_{-r}(z) \left(T_{\lambda}^{(3)}(u(z)) u'(z)^3+3 T_{\lambda}''(u(z))u'(z) u''(z) +T_{\lambda}'(u(z))u^{(3)}(z) \right)\\
\end{aligned}
\end{equation}
 and then
substituting $z=r$ into \eqref{appendix:g'''}, we have
\begin{align*}
&(T_{-u_0})'''(g(r)) (g'(r))^3+3(T_{-u_0})''(g(r)) g''(r)+T_{-u_0}'''(g(r)) g'''(r)\\
&\quad=T_{-r}'''(r) T_{\lambda}(0)+3T_{-r}''(r) T_{\lambda}'(0)u'(r)+3T_{-r}'(r) \left(T_{\lambda}''(0)u'(r)^2+ T_{\lambda}'(0)u''(r)\right).
\end{align*}
We get $$g'''(r)=\frac{6(r^2-s^2)}{r^4(1-r^2)^3}
\left[\lambda(s\lambda-r^2)^2+r\mu(1-|\lambda|^2)(2r^2-2s\lambda-r\bar{\lambda}\mu)+r^2\tau(1-|\lambda|^2)(1-|\mu|^2)\right].$$
Together with $f'''(z)=3g''(z)+z g'''(z)$, we obtain
$
f'''(r)=w_3$.

It remains to determine the form of $f^{(4)}(r)$. Differentiating both sides of \eqref{appendix:g'''}, we have
\begin{equation}\label{appendix:g''''}
\begin{aligned}
&g^{(4)}(z) (T_{-u_0})'(g(z))+(T_{-u_0})^{(4)}(g(z)) g'(z)^4+3  (T_{-u_0})''(g(z)) g''(z)^2\\
&\qquad\qquad\qquad\qquad\quad+4 g^{(3)}(z) g'(z) (T_{-u_0})''(g(z))+6 (T_{-u_0})^{(3)}(g(z)) g'(z)^2 g''(z)\\
&=T_{-r}^{(4)}(z) T_{\lambda}(u(z))+4 T_{-r}^{(3)}(z) T_{\lambda}'(u(z)) u'(z)+6 T_{-r}''(z) \left(T_{\lambda}''(u(z)) u'(z)^2 + T_{\lambda}'(u(z))u''(z)\right)\\
&\quad+4 T_{-r}'(z) \left(T_{\lambda}^{(3)}(u(z)) u'(z)^3+3T_{\lambda}''(u(z)) u'(z) u''(z) +T_{\lambda}'(u(z))u^{(3)}(z) \right)\\
&\quad+T_{-r}(z) \big(T_{\lambda}^{(4)}(u(z)) u'(z)^4+6 T_{\lambda}^{(3)}(u(z)) u'(z)^2 u''(z) \\
&\qquad\qquad\qquad+3T_{\lambda}''(u(z)) u''(z)^2+4 T_{\lambda}''(u(z)) u'(z) u^{(3)}(z) +T_{\lambda}'(u(z))u^{(4)}(z) \big)
\end{aligned}
\end{equation}
and then
substituting $z=r$ into \eqref{appendix:g''''}, we have
\begin{align*}
&g^{(4)}(r) (T_{-u_0})'(g(r))+(T_{-u_0})^{(4)}(g(r)) g'(r)^4+3  (T_{-u_0})''(g(r)) g''(r)^2\\
&\qquad\qquad\qquad\qquad\quad+4 g^{(3)}(r) g'(r) (T_{-u_0})''(g(r))+6 (T_{-u_0})^{(3)}(g(r)) g'(r)^2 g''(r)\\
&=T_{-r}^{(4)}(r) T_{\lambda}(0)+4 T_{-r}^{(3)}(r) T_{\lambda}'(0) u'(r)+6 T_{-r}''(r) \left(T_{\lambda}''(0) u'(r)^2 + T_{\lambda}'(0)u''(r)\right)\\
&\quad+4 T_{-r}'(r) \left(T_{\lambda}^{(3)}(0) u'(r)^3+3T_{\lambda}''(0) u'(r) u''(r) +T_{\lambda}'(0)u^{(3)}(r) \right).
\end{align*}
We get
\begin{align*}
&g^{(4)}(r)=\frac{24(r^2-s^2)}{r^5(1-r^2)^4}\left[\lambda r^6-s^3\lambda^4-3s^2\lambda^2(\lambda(r^2-s\lambda)+r\mu(1-|\lambda|^2))\right.\\
&-2s\lambda(\lambda(s\lambda-r^2)^2+r\mu(1-|\lambda|^2)(2r^2-2s\lambda-r\bar{\lambda}\mu)+r^2\tau(1-|\lambda|^2)(1-|\mu|^2))\\
&-s(\lambda(r^2-s\lambda)+r\mu(1-|\lambda|^2))^2+r^3(1-|\lambda|^2)(\overline{\lambda}^2\mu^3+3r^2\mu-3r\overline{\lambda}\mu^2)\\
&\left.+r^3(1-|\lambda|^2)(1-|\mu|^2)(3r\tau-2\bar{\lambda}\mu\tau-\bar{\mu}\tau^2)+r^3(1-|\lambda|^2)(1-|\mu|^2)(1-|\tau|^2)\alpha\right].
\end{align*}
Together with $f^{(4)}(z)=4g'''(z)+z g^{(4)}(z)$, we obtain
\begin{align*}
f^{(4)}(r)= &rg^{(4)}(r)+4g'''(r)=\frac{24(r^2-s^2)}{r^4(1-r^2)^4}\left[\lambda r^6-s^3\lambda^4-3s^2\lambda^2(-s\lambda^2+r^2\lambda+r\mu(1-|\lambda|^2))\right.\\
&+(1-r^2-2s\lambda)(\lambda(s\lambda-r^2)^2+r\mu(1-|\lambda|^2)(2r^2-2s\lambda-r\bar{\lambda}\mu)+r^2\tau(1-|\lambda|^2)(1-|\mu|^2))\\
&-s(-s\lambda^2+r^2\lambda+r\mu(1-|\lambda|^2))^2+r^3(1-|\lambda|^2)(\overline{\lambda}^2\mu^3+3r^2\mu-3r\overline{\lambda}\mu^2)\\
&+r^3(1-|\lambda|^2)(1-|\mu|^2)(3r-2\bar{\lambda}\mu-\bar{\mu}\tau)\tau+r^3(1-|\lambda|^2)(1-|\mu|^2)(1-|\tau|^2)\alpha].\\
&=c_4(r,s,\lambda,\mu,\tau)+\rho_4(r,s,\lambda,\mu,\tau)\alpha.
\end{align*}

Now $\alpha \in \ov{\D}$ is arbitrary, so the closed disk $\overline{\D}(c_4(r,s,\lambda,\mu,\tau),\rho_4(r,s,\lambda,\mu,\tau))$ is covered.

We know that $f^{(4)}(r)\in \partial \D(c_4(r,s,\lambda,\mu,\tau), \rho_4(r,s,\lambda,\mu,\tau))$ if and only if $f(z)=zg(z)$, where $g$ is a Blaschke product of degree 4 satisfying \eqref{condition2}, and then
we apply this fact to determine the precise form of $g$. Set
$$h(z)=\dfrac{T_{-u_0}\circ g \circ T_{r}(z)}{z},\quad z\in \D.$$
Clearly, $h$ is a Blaschke product of degree 3 depending on $g$ and satisfying
$$h(0)=(T_{-u_0}\circ g \circ T_r)'(0)=\lambda,$$
$$h'(0)=\frac{(T_{-u_0}\circ g \circ T_r)''(0)}{2}=\mu(1-|\lambda|^2).$$
and
$$h''(0)=\frac{(T_{-u_0}\circ g \circ T_r)'''(0)}{3}=2(1-|\lambda|^2)\left[-\overline{\lambda}\mu^2+\tau(1-|\mu|^2)\right].$$
Then $H(z)=T_{-\lambda}\circ h(z)$ is a Blaschke product of degree 3 fixing $0$.  Set
$$G(z)=\frac{H(z)}{z}.$$
Obviously, $G$ is a Blaschke product of degree 2 depending on $g$ and satisfying
$$G(0)=H'(0)=T_{-\lambda}'(\lambda) h'(0)=\mu.$$
$$G'(0)=\frac{(T_{-\lambda}\circ h)''(0)}{2}=\tau(1-|\mu|^2).$$
Thus $T_{-\mu}\circ G$ is a Blaschke product of degree 2 fixing 0, set
$$F(z)=\frac{T_{-\mu}\circ G(z)}{z},$$
then $F(z)$
is an automorphism of $\D$ depending on $g$ and satisfying
$$F(0)=(T_{-\mu}\circ G)'(0)=T_{-\mu}'(\mu) G'(0)=\tau.$$
Thus $T_{-\tau}\circ F$ is an automorphism of $\D$ fixing 0,
which means that $T_{-\tau}\circ F(z)=e^{i \theta} z$ for $z\in \D$ and $\theta\in \R$.
Now it is easy to check that
$$g(z)=T_{u_0}\left(T_{-r}(z) T_{\lambda}(T_{-r}(z) T_{\mu}(T_{-r}(z) T_{\tau}(e^{i \theta} T_{-r}(z))))\right), \quad z\in \D.$$
Conversely, if
$f(z)=z T_{u_0}\left(T_{-r}(z) T_{\lambda}(T_{-r}(z) T_{\mu}(T_{-r}(z) T_{\tau}(e^{i \theta} T_{-r}(z))))\right)$, where $\th \in \R$, then direct calculations gives
$$f^{(4)}(r)=c_4(r,s,\lambda,\mu,\tau)+\rho_4(r,s,\lambda,\mu,\tau) e^{i \theta}\in \partial \D(c_4(r,s,\lambda,\mu,\tau), \rho_4(r,s,\lambda,\mu,\tau)).$$
Hence we complete the proof.
\end{proof}
\section{Variability region for the fourth derivative}
Let $\beta_1,\beta_2\in \D$, we analyze  the variability region
$$V(z_0,w_0,\beta_1,\beta_2)=\{f^{(4)}(z_0):f\in \H_0(z_0,w_0,\beta_1,\beta_2)\},$$
where
\begin{align*}
&\H_0 (z_0,w_0,\beta_1) =  \{ f \in \H_0 : f(z_0) =w_0, f'(z_0)=c_1(z_0,w_0)+\rho_1(z_0,w_0)\dfrac{r\beta_1}{z_0}\},\\
&\H_0 (z_0,w_0,\beta_1,\beta_2)\! = \! \{ f \in \H_0(z_0,w_0,\beta_1) \!: \! f''(z_0)\!=\!c_2(z_0,w_0,\beta_1)\!+\!\rho_2(z_0,w_0,\beta_1)\frac{r\beta_2}{z_0}\}.
\end{align*}

Since the relation
$V(r,s,\lambda,\mu)=e^{i(4\vp-\xi)}V(z_0,w_0,\beta_1,\beta_2)$ holds for $\lambda=e^{-i \xi}\beta_1$ and $\mu=e^{i(\vp-\xi)}\beta_2$, where $z_0=re^{i \vp}, w_0=s e^{i \xi}\in \D$ with $s<r$,
it is sufficient to determine the variability region $V(r,s,\lambda,\mu )$, $\lambda,\mu\in \D$. 

We define $c(\zeta),\rho(\zeta)$ and $V$ by
\begin{equation}\label{eq:c(zeta)-rho(zeta)}
c(\zeta)=\zeta (1-\eta\zeta),\quad \rho(\zeta)=t(1-|\zeta|^2),\quad V
 = \bigcup_{\zeta \in \overline{\mathbb{D}}}
  \overline{\mathbb{D}}(c( \zeta), \rho(\zeta )),
\end{equation}
where
$$\eta=\frac{r\bar{\mu}}{1+2r^2-2s\lambda-2r\bar{\lambda}\mu},\quad  t=\frac{r}{|1+2r^2-2s\lambda-2r\bar{\lambda}\mu|}.$$
Then by the fourth-order Dieudonn\'e Lemma, we have
$$V(r,s,\lambda ,\mu) = A
  \left(B+C V\right),$$
  where $C\overline{\mathbb{D}}(c, \rho)$ means
$\overline{\mathbb{D}}(Cc, |C|\rho)$ and
\begin{equation}\label{eq:C}
C=r^2(1-|\lambda|^2)(1-|\mu|^2)(1+2r^2-2s\lambda-2r\bar{\lambda}\mu)\in \C.
\end{equation}

Since the set $V$ has the same properties as $V(r,s,\lambda,\mu )$, we just need to determine the set $V$, which is reduced to the case in \cite{chen2021}.
Therefore, we can immediately obtain the following theorem, analogous to \cite[Theorem 3.3]{chen2021}, which gives the parametric representation of $\p V(r,s,\lambda,\mu)$. Recall that $A,B,C$ are given in \eqref{eq:AB} and \eqref{eq:C}.
\begin{thm}
\label{thm:boundary-curve}
Let $0 \leq s < r< 1$ and $|\lambda|<1$, $|\mu|<1$. For $\theta \in \mathbb{R}$, let
$t_\theta$ be the unique solution to the
equation
\begin{equation}
\label{eq:definig_equation_of_r_theta}
   |xe^{i\theta} - \ov{\eta}| = 2(x^2-|\eta|^2) , \quad x >|\eta| ,
\end{equation}
if $|xe^{i\theta} -\ov{\eta}| \geq  2(x^2-|\eta|^2) $;
otherwise let $t_\theta =t$.

Set
\begin{equation}
\label{eq:def_of_zeta_theta}
  \zeta_\theta = \frac{t_\theta e^{i\theta} -\ov{\eta} }{2(t_\theta^2-|\eta|^2)}
   \in \overline{\mathbb{D}} .
\end{equation}
Then $V(r,s,\lambda,\mu)$ is a convex closed domain enclosed by the Jordan curve $\partial V(r,s,\lambda,\mu)$ and the parametric representation
$(- \pi , \pi ] \ni \theta \mapsto \gamma ( \theta )$
of $\partial V(r,s,\lambda,\mu)$
 is given as follows.
\begin{enumerate}
 \item[{\rm (i)}]
If $t+|\eta|\leq \frac{1}{2} $, then $|te^{i \theta}-\ov{\eta}| \geq 2(t^2-|\eta|^2)$
for all $\theta \in \mathbb{R}$ and
$$
   \gamma( \theta )=
 A \left(B+C c( \zeta_\theta ) \right)   \in \partial V(r,s,\lambda,\mu).$$
  \item[{\rm (ii)}]
If $t-|\eta|\leq \frac{1}{2} $, then $|te^{i \theta}-\ov{\eta}| \le 2(t^2-|\eta|^2)$
for all $\theta \in \mathbb{R}$ and
 $$\gamma( \theta ) =
    A \left(B+C( c( \zeta_\theta )  + \rho(\zeta_\theta) e^{i \theta}) \right)   \in \partial V(r,s,\lambda,\mu).$$
 \item[{\rm (iii)}]
If  $t+|\eta| > \frac{1}{2}$ and $t-|\eta| < \frac{1}{2} $, then
\[
   \gamma( \theta ) =
  \begin{cases}
  A \left(B+C( c( \zeta_\theta )  + \rho(\zeta_\theta) e^{i \theta}) \right) , \quad
   & |te^{i \theta}-\ov{\eta}| < 2(t^2-|\eta|^2) , \\[2ex]
      A \left(B+C c( \zeta_\theta ) \right)  , \quad
   & |te^{i \theta}-\ov{\eta}|\ge 2(t^2-|\eta|^2).
  \end{cases}	
\]
\end{enumerate}
\end{thm}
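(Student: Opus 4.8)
The plan is to reduce the statement to the already-established third-order result \cite[Theorem 3.3]{chen2021}. By the fourth-order Dieudonn\'e Lemma (Theorem~\ref{thm:lemfourth}) the region equals the affine image $V(r,s,\lambda,\mu) = A(B+CV)$ of the model set $V = \bigcup_{\zeta\in\overline{\D}}\overline{\D}(c(\zeta),\rho(\zeta))$, and since $A,C\ne 0$ the map $w\mapsto A(B+Cw) = ACw + AB$ is a similarity of $\C$. Similarities preserve convexity, carry Jordan curves to Jordan curves, and send boundaries to boundaries, so it suffices to describe $V$ and then transport $c(\zeta_\theta)$, $\rho(\zeta_\theta)$ and the curve $\gamma(\theta)$ through this map. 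The decisive observation is that $c(\zeta)=\zeta(1-\eta\zeta)$ and $\rho(\zeta)=t(1-|\zeta|^2)$ have \emph{exactly} the algebraic form treated in \cite{chen2021}; hence the analysis there applies once one checks $t>|\eta|$, which holds because $|\eta| = t|\mu| < t$ when $|\mu|<1$.

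For completeness I would also re-derive the formulas directly via the support function, since this makes transparent why $\zeta_\theta$ and $t_\theta$ take the stated shapes. As $V$ is a union of closed disks, its support function in direction $e^{i\theta}$ is
\[
  H(\theta) = \sup_{\zeta\in\overline{\D}}\bigl[\Re\bigl(e^{-i\theta}c(\zeta)\bigr) + \rho(\zeta)\bigr],
\]
and the boundary point of the disk $\overline{\D}(c(\zeta),\rho(\zeta))$ with outward normal $e^{i\theta}$ is $c(\zeta)+\rho(\zeta)e^{i\theta}$. So I would maximize $\Phi(\zeta)=\Re\bigl(e^{-i\theta}\zeta(1-\eta\zeta)\bigr)+t(1-|\zeta|^2)$ over $\overline{\D}$ by Wirtinger calculus; the interior equation $\partial\Phi/\partial\overline{\zeta}=0$, namely $\tfrac12 e^{i\theta}-e^{i\theta}\overline{\eta}\,\overline{\zeta}-t\zeta=0$, has the unique solution
\[
  \zeta = \frac{te^{i\theta}-\overline{\eta}}{2(t^2-|\eta|^2)}.
\]

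The dichotomy in (i)--(iii) then reflects whether this interior critical point lies in $\D$. If $|te^{i\theta}-\overline{\eta}|<2(t^2-|\eta|^2)$ the maximizer is interior, $t_\theta=t$, $\zeta_\theta$ is given by \eqref{eq:def_of_zeta_theta}, and the support point is $c(\zeta_\theta)+\rho(\zeta_\theta)e^{i\theta}$. Otherwise the maximum sits on $\{|\zeta|=1\}$; a Lagrange-multiplier argument replaces $t$ by a parameter $x=t_\theta$ and forces $|\zeta_\theta|=1$, which is precisely the defining equation \eqref{eq:definig_equation_of_r_theta}, and then $\rho(\zeta_\theta)=0$ collapses the support point to $c(\zeta_\theta)$. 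The three regimes record whether $|te^{i\theta}-\overline{\eta}|\ge 2(t^2-|\eta|^2)$ holds for all, for no, or for some $\theta$, which by $\min_\theta|te^{i\theta}-\overline{\eta}| = t-|\eta|$ and $\max_\theta|te^{i\theta}-\overline{\eta}| = t+|\eta|$ is controlled by the position of $t\pm|\eta|$ relative to $\tfrac12$.

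The main obstacle is not this envelope computation but the global topological claims: that $V$ is convex, that $t_\theta$ is well defined (existence and uniqueness of the root of \eqref{eq:definig_equation_of_r_theta} on $x>|\eta|$), and that $\theta\mapsto\gamma(\theta)$ parametrizes a single Jordan curve with the two arc-types joining continuously at the transition angles where $|te^{i\theta}-\overline{\eta}|=2(t^2-|\eta|^2)$. I expect to settle these exactly as in \cite{chen2021}: convexity because $H$ is a bona fide support function, uniqueness of $t_\theta$ from the strict monotonicity of $x\mapsto 2(x^2-|\eta|^2)-|xe^{i\theta}-\overline{\eta}|$, and the Jordan property from continuity of $\theta\mapsto(t_\theta,\zeta_\theta)$ together with strict convexity of $V$. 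Pushing everything forward through $w\mapsto A(B+Cw)$ then yields the stated parametrization of $\partial V(r,s,\lambda,\mu)$.
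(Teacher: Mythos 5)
Your proposal is correct and takes essentially the same route as the paper: the paper's entire proof consists of observing that $V(r,s,\lambda,\mu)=A\left(B+CV\right)$, where the model set $V=\bigcup_{\zeta\in\overline{\D}}\overline{\D}(c(\zeta),\rho(\zeta))$ with $c(\zeta)=\zeta(1-\eta\zeta)$, $\rho(\zeta)=t(1-|\zeta|^2)$ has exactly the form already analyzed in \cite[Theorem 3.3]{chen2021}, and then transporting that result through the similarity $w\mapsto A(B+Cw)$. Your supplementary support-function/Wirtinger computation (including the verification $|\eta|=t|\mu|<t$) is just an unpacking of the argument cited from \cite{chen2021}, not a different method.
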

\begin{rmk}
We can explicitly determine all the extremal functions $f\in \H_0(r,s,\lambda,\mu)$ with $f^{(4)}(r) \in \p V(r,s,\lambda,\mu)$.
The equality
\[
    f^{(4)}(r) =  A \left(B+C( c( \zeta_\theta )  + \rho(\zeta_\theta) e^{i \theta} )\right)
\]
holds for some $\theta \in \mathbb{R}$ with $\zeta_\theta \in \mathbb{D}$
if and only if
$$
f(z)=z T_{\frac{s}{r}}\left(T_{-r}(z) T_{\lambda}(T_{-r}(z) T_{\mu}(T_{-r}(z) T_{\zth}(e^{i (\theta+\arg C)} T_{-r}(z))))\right), \quad z \in \mathbb{D}.$$
Similarly the equality
\[
    f^{(4)}(r) =   A \left(B+C c (\zeta_\theta) \right)
\]
holds for some $\theta \in \mathbb{R}$ with $\zeta_\theta \in \partial \mathbb{D}$
if and only if
$$
  f(z)=
    z T_{\frac{s}{r}}\left(T_{-r}(z) T_{\lambda}(T_{-r}(z) T_{\mu}(\zth T_{-r}(z)))\right), \quad z \in \mathbb{D}.
$$
\end{rmk}
We end this section by asking the interesting question: is it possible to explicitly determine the variability region
$\{f^{(4)}(z_0): f\in \H_0,f(z_0) =w_0\}$ for given $z_0,w_0\in \D$ with $|w_0|<|z_0|$?
\bibliographystyle{amsplain} 


\end{document}